\newtheorem{theorem}{Theorem}[section]
\newtheorem{proposition}[theorem]{Proposition}
\newtheorem{lemma}[theorem]{Lemma}
\newtheorem{rem}[theorem]{Remark}
\def\cC{\mathcal C}
\def\cX{\mathcal X}
\def\cY{\mathcal Y}
\def\cZ{\mathcal Z}
\def\K{\mathbb{K}}
\def\ord{\mbox{\rm ord}}
\def\deg{\mbox{\rm deg}}
\def\gg{\mathfrak{g}}
\newcommand{\aut}{\mbox{\rm Aut}}
\title{Large $3$-groups of automorphisms of algebraic curves in characteristic $3$}
\author{Massimo Giulietti and G\'abor Korchm\'aros}
\begin{document}
\maketitle


    \begin{abstract}
Let $S$ be a $p$-subgroup of the $\K$-automorphism group $\aut(\cX)$ of an algebraic curve $\cX$ of genus $\gg\ge 2$ and $p$-rank $\gamma$ defined over an algebraically closed field $\mathbb{K}$ of characteristic $p\geq 3$.
In this paper we prove that if $|S|>2(\gg-1)$ then one of the following cases occurs.
\begin{itemize}
\item[(i)] $\gamma=0$ and the extension $\K(\cX)/\K(\cX)^S$  completely ramifies at a unique place, and does not ramify elsewhere.
\item[(ii)] $\gamma>0$, $p=3$, $\cX$ is a general curve, $S$ attains the Nakajima's upper bound $3(\gamma-1)$ and $\K(\cX)$ is an unramified Galois extension of the function field of
a general curve of genus $2$ with equation $Y^2=cX^6+X^4+X^2+1$ where $c\in\K^*$.
\end{itemize}
Case (i) was investigated by Stichtenoth, Lehr, Matignon, and Rocher, see  \cite{stichtenoth1973I,lehr-matignon2005,matignon-rocher2008,rocher1,rocher2}.
\end{abstract}

    \section{Introduction}
In the present paper, $\K$ is an algebraically closed field of
characteristic $p\geq 3$,  $\cX$ is {a} (projective, non-singular,
geometrically irreducible, algebraic) curve of genus $\gg(\cX)\geq
2$ with function field $\mathbb K(\cX)$, $\aut(\cX)$ is the $\K$-automorphism group of $\cX$, and $S$
is a (non-trivial) subgroup of $\aut(\cX)$ whose order is a power of $p.$

The earliest results on the maximum size of $S$ date back to the 1970's
and have played an important role in the study of curves with large automorphism groups exceeding the classical Hurwitz bound $84(\gg(\cX)-1)$. Stichtenoth proved that if $S$ fixes a place $\mathcal P$ of $\mathbb K(\cX)$ then
\begin{equation}
\label{eq117feb2013}
|S|\leq \frac{p}{p-1}\,\gg(\cX)
\end{equation}
unless the extension $\mathbb K(\cX):\mathbb K(\cX)^S$ completely ramifies at $\mathcal P$, and does not ramify elsewhere; in geometric terms, $S$ fixes a point $P$ of $\cX$ and acts on $\cX\setminus\{P\}$ as a semiregular permutation group; see \cite{stichtenoth1973I} and also \cite[Theorem 11.78]{hirschfeld-korchmaros-torres2008}.
In the latter case, the Stichtenoth bound is
\begin{equation}
\label{sti16feb2013}
|S|\leq \frac{4p}{p-1}\,\gg(\cX)^2.
\end{equation}
In his 1987 paper \cite{nakajima1987} Nakajima pointed out that the maximum size of $S$ is related to the Hasse-Witt invariant $\gamma(\cX)$ of $\cX$. It is known
that $\gamma(\cX)$ coincides with the $p$-rank  of $\cX$ defined to be the rank of the (elementary abelian) group of the $p$-torsion points in the Jacobian variety of $\cX$; moreover,   $\gamma(\cX)\leq {\gg}(\cX)$ and when equality holds then $\cX$ is called an {\em ordinary} (or {\em general}) curve; see \cite[Section 6.7]{hirschfeld-korchmaros-torres2008}.
If $S$ fixes a point and (\ref{eq117feb2013}) fails then $\gamma(\cX)=0$; conversely, if $\gamma(\cX)=0$, then $S$ fixes a point, see  \cite[Lemma 11.129]{hirschfeld-korchmaros-torres2008}.
For $\gamma(\cX)>0$, Nakajima proved that $|S|$ divides $\gg(\cX)-1$ when $\gamma(\cX)=1$, and
$|S|\le p/(p-2) (\gamma(\cX)-1)$ otherwise; see \cite{nakajima1987} and also \cite[Theorem 11.84]{hirschfeld-korchmaros-torres2008}. Therefore,
the Nakajima bound \cite[Theorem 1]{nakajima1987} is
\begin{equation}
\label{naka16feb2013}
|S|\leq \left\{
\begin{array}{lll}
{p}/({p-2})\,(\gg(\cX)-1)\quad {\mbox{for}}\quad \gamma(\cX)>1,\\
\gg(\cX)-1\quad {\mbox{for}}\quad \gamma(\cX)=1.
\end{array}
\right.
\end{equation}

In this context, a major  issue is to determine the possibilities for $\cX$, $\gg$ and $S$ when either $|S|$ is close to the Stichtenoth bound (\ref{sti16feb2013}), or $|S|$ is close to the Nakajima bound (\ref{naka16feb2013}).

Lehr and Matignon \cite{lehr-matignon2005} investigated the case where $S$ fixes a point and were able to determine all curves $\cX$ with
\begin{equation}
\label{lehrm} |S|>\frac{4}{(p-1)^2}\,g^2,
\end{equation}
proving that (\ref{lehrm})
only occurs when the curve is birationally equivalent over $\K$ to
an Artin-Schreier curve of equation $Y^q-Y=f(X)$ such that
$f(X)=XS(X)+cX$ where $S(X)$ is an additive polynomial of $\K[X]$.
Later on, Matignon and Rocher \cite{matignon-rocher2008} showed
that the action of a $p$-subgroup of $\K$-automorphisms
$S$ satisfying
$$|S|>\frac{4}{(p^2-1)^2}\,g^2,$$ corresponds to the
\'etale cover of the affine line with Galois group
$S\cong(\mathbb{Z}/p \mathbb{Z})^n$ for $n\leq 3$. These results
have been refined by Rocher, see \cite{rocher1} and
\cite{rocher2}. The essential tools used in the above
mentioned  papers are ramification theory and some structure
theorems about finite $p$-groups.

Curves close to the Nakajima bound are investigated in this paper. The main result is stated in the following theorem.
\begin{theorem}
\label{princ}
Let $S$ be a $p$-subgroup of the $\K$-automorphism group $\aut(\cX)$ of an algebraic curve $\cX$ of genus $\gg(\cX)\geq 2$ defined over an algebraically closed field $\mathbb{K}$ of characteristic $p\geq 3$. If
\begin{equation}
\label{hyp}
|S|>2(\gg(\cX)-1)
\end{equation}
 then one of the following cases occurs:
 \begin{itemize}
\item[{\rm{(i)}}] $\gamma=0$ and the extension $\K(\cX)/\K(\cX)^S$  completely ramifies at a unique place, and does not ramify elsewhere.
\item[{\rm{(ii)}}] $\gamma>0$, $p=3$, $\cX$ is a general curve, $S$ attains the Nakajima's upper bound $3(\gamma-1)$ and $\K(\cX)$ is an unramified Galois extension of the function field of
a general curve of genus $2$ with equation $Y^2=cX^6+X^4+X^2+1$ where $c\in\K^*$.
\end{itemize}
\end{theorem}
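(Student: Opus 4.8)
The plan is to split on the value of the $p$-rank $\gamma$, the case $\gamma=0$ reducing quickly to Stichtenoth's theorem and the case $\gamma>0$ being the substantive part. If $\gamma=0$ then $S$ fixes a place by \cite[Lemma 11.129]{hirschfeld-korchmaros-torres2008}, and Stichtenoth's bound $|S|\le \frac{p}{p-1}\gg(\cX)$ holds unless the extension totally ramifies there and nowhere else, which is exactly case (i). Since $p\ge 3$ gives $\frac{p}{p-1}\le\frac32$, the hypothesis $|S|>2(\gg(\cX)-1)$ forces $\gg(\cX)<4$; the finitely many remaining possibilities $\gg(\cX)\in\{2,3\}$ I would dispose of directly by the Deuring--Shafarevich formula, checking that the only admissible ramification is a single totally ramified place, again case (i). If $\gamma=1$, Nakajima's bound gives $|S|\mid(\gg(\cX)-1)$, whence $|S|\le\gg(\cX)-1<2(\gg(\cX)-1)$, a contradiction. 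If $\gamma\ge 2$, Nakajima's bound reads $|S|\le\frac{p}{p-2}(\gamma-1)$, and combined with $|S|>2(\gg(\cX)-1)\ge 2(\gamma-1)$ it yields $\frac{p}{p-2}>2$, i.e. $p<4$; hence $p=3$ and $|S|\le 3(\gamma-1)$.

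For the core case $\gamma\ge 2$, $p=3$, set $\cY=\cX/S$ and denote by $\gg(\cY),\gamma(\cY)$ its genus and $p$-rank. The engine is the simultaneous use of Riemann--Hurwitz, $2\gg(\cX)-2=|S|(2\gg(\cY)-2)+\sum_P d_P$, and Deuring--Shafarevich, $\gamma-1=|S|(\gamma(\cY)-1)+\sum_P(e_P-1)$, where $e_P=|S_P|$. Since $S$ is a $3$-group every inertia group is wild with $S_P^{(0)}=S_P^{(1)}$, so $d_P\ge 2(e_P-1)$; subtracting twice the second formula from the first gives $\gg(\cX)-\gamma\ge |S|(\gg(\cY)-\gamma(\cY))$. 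Feeding in $|S|>2(\gg(\cX)-1)$, I would first rule out $\gamma(\cY)\ge 2$ (it would push the right-hand side of Deuring--Shafarevich above $\gamma-1$), then force $\gg(\cY)=\gamma(\cY)$, and finally eliminate the elliptic quotient $\gg(\cY)=\gamma(\cY)=1$: writing $\sum_P(e_P-1)=|S|\bigl(t-\sum_{j=1}^t e_j^{-1}\bigr)$ for $t$ branch places with $e_j\ge 3$, the genus-one case yields $\gamma-1\ge\frac{2t}{3}|S|$, incompatible with $\gamma-1<\tfrac12|S|$ unless $t=0$, i.e. unless the cover is unramified and $\gamma=1$. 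Hence $\gg(\cY)=\gamma(\cY)=0$ and $\cX/S$ is rational.

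With $\cY$ rational the Deuring--Shafarevich count becomes $\gamma-1=|S|\bigl(t-1-\sum_{j=1}^t e_j^{-1}\bigr)$. Combining $\gamma-1<\tfrac12|S|$ with $e_j\ge 3$ pins down $t=2$ and $e_1=e_2=3$, and substituting back gives $\gamma-1=\tfrac13|S|$, so $S$ attains Nakajima's bound $|S|=3(\gamma-1)$. Thus $\cX\to\cX/S$ is a cover of a rational curve branched at exactly two places, each with an inertia group of order $3$.

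The final and most delicate step is to produce the promised unramified genus-$2$ sub-extension. As $\cX/S$ is rational, $S$ is generated by the conjugates of the two inertia groups $I_1,I_2$, for otherwise the quotient of $\cX$ by the subgroup they generate would be a nontrivial unramified cover of a rational curve. Passing to the Frattini quotient $V=S/\Phi(S)\cong(\Z/3\Z)^d$, the images $\bar I_1,\bar I_2$ generate $V$, so $d\le 2$; the case $d=1$ forces $S\cong\Z/3\Z$ and $\cX$ itself of genus $2$, while for $d=2$ the images are distinct lines and I can choose an index-$3$ normal subgroup $N\triangleleft S$ whose image in $V$ is a line avoiding both $\bar I_1$ and $\bar I_2$. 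Then $S_P\cap N=1$ at every ramified $P$, so $\cX\to\cX/N$ is unramified; applying Riemann--Hurwitz and Deuring--Shafarevich to this subcover with $|N|=\gamma-1$ forces $\gamma(\cX/N)=2$ and, invoking $|S|>2(\gg(\cX)-1)$ once more, $\gg(\cX/N)=2$ and $\gg(\cX)=\gamma$, so $\cX$ is general and $\cX/N$ is a general curve of genus $2$. It then remains to identify $\cX/N$: it carries the order-$3$ automorphism $S/N$ with rational quotient branched at two places of conductor $1$, hence is the Artin--Schreier curve $y^3-y=\beta x+\alpha x^{-1}$, and rewriting this genus-$2$ curve in hyperelliptic form should yield the model $Y^2=cX^6+X^4+X^2+1$. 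I expect the main obstacle to be precisely this closing step: controlling the generation of $S$ by its inertia groups and the Frattini quotient so as to guarantee the unramified index-$3$ subgroup, and then carrying out the explicit birational reduction to the stated hyperelliptic normal form.
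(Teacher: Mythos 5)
Your proposal is correct, and while its first half coincides with the paper's, its endgame takes a genuinely leaner route. The Stichtenoth/Nakajima reduction to $p=3$ and the combined Riemann--Hurwitz/Deuring--Shafarevich analysis forcing a rational quotient, exactly two branch places with inertia of order $3$, and $\gamma-1=\frac{1}{3}|S|$ are exactly Propositions \ref{prop19feb1} and \ref{propap=3} (phrased there via two short orbits of length $\frac{1}{3}|S|$). Where you diverge: the paper reaches the unramified genus-$2$ subcover through an extended structural study of $S$ --- Proposition \ref{30ag2013} shows $\Phi(S)=S'$ has index $9$ and that exactly two of the four maximal normal subgroups are semiregular, resting on the normal-subgroup dichotomy of Proposition \ref{propcp=3}, the quotient machinery of Propositions \ref{a30ag2013} and \ref{pro12dic}, and separate analyses for $|S|=3,9,27,81,243$ including a canonical-embedding computation for $|S|=9$ --- whereas you obtain the needed subgroup in one stroke: $S$ is generated by the conjugates of the two inertia groups because $\mathbb{P}^1$ is simply connected (equivalently, Riemann--Hurwitz kills any nontrivial unramified intermediate cover of the line), so $S/\Phi(S)\cong(\mathbb{Z}/3\mathbb{Z})^d$ with $d\le 2$, and for $d=2$ either of the two lines of $S/\Phi(S)$ distinct from the two inertia images pulls back to a semiregular normal subgroup $N$ of index $3$; this $N$ is precisely the paper's $M_3$ or $M_4$ of Proposition \ref{30ag2013}(iv), rederived without the intermediate propositions. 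Your extraction of ordinariness ($\gg(\cX)=\gamma$, hence $\gg(\cX/N)=2$) from the unramified subcover together with \eqref{hyp} is also cleaner than the paper's higher-ramification count in \eqref{eq19feb}. Two touch-ups are worth recording: for $d=1$ with $|S|\ge 9$, $S$ would be cyclic and its unique subgroup of order $3$ would lie in $\Phi(S)$, contradicting generation by inertia, which makes ``$d=1$ forces $|S|=3$'' airtight; and the closing normal-form reduction you flag as the main obstacle is already on the shelf, since Proposition \ref{igusa1} and Remark \ref{igusa1bis} record that $Y^2=cX^6+X^4+X^2+1$ is equivalent to the Artin--Schreier model $X(Y^3-Y)-X^2+c=0$, which is your $y^3-y=\beta x+\alpha x^{-1}$ after rescaling $x$ and adjusting the sign of $y$ (so you may simply cite Igusa's classification as the paper does). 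The trade-off: your route proves case (ii) uniformly in $|S|$ with minimal group theory, while the paper's heavier apparatus additionally yields the possible isomorphism types of $S$ for small orders and explicit defining equations for the curves.
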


The analogous problem for $2$-groups of automorphisms $S$ in characteristic $p=2$ was investigated in \cite{gkp=2}. 

One may also ask how the above results may be refined when $\aut(\cX)$ is larger than $S$. So far, this problem has been
investigated for zero $p$-rank curves $\cX$ such that $\aut(\cX)$ fixes no point of $\cX$; see  \cite{gktrans,gklondon,gur}.

\section{Background and Preliminary Results}\label{sec2}
Let $\bar \cX$ be a non-singular model of $\K(\cX)^S$, that is,
a projective non-singular geometrically irreducible algebraic
curve with function field $\K(\cX)^S$, where $\K(\cX)^S$ consists of all elements of $\K(\cX)$
fixed by every element in $S$. Usually, $\bar \cX$ is called the
quotient curve of $\cX$ by $S$ and denoted by $\cX/S$. The
 field extension
$\K(\cX)/\K(\cX)^S$ is  Galois of degree $|S|$.

Let $\bar{P_1},\ldots,\bar{P_k}$ be the points of the quotient curve $\bar{\cX}=\cX/S$ where the cover $\cX\mapsto\bar{\cX}$ ramifies. For $1\leq i\leq k$, let $L_i$ denote the set of points of $\cX$ which lie {over} $\bar{P_i}$.
In other words, $L_1,\ldots,L_k$ are the short orbits of
$S$ on its faithful action on $\cX$. Here the orbit of $P\in \cX$
$$o(P)=\{Q\mid Q=P^g,\, g\in S\}$$ is {\em long} if $|o(P)|=|S|$, otherwise $o(P)$ is {\em short}. It may be that $S$ has no short orbits. This is the case if and only if every non-trivial element in $S$ is fixed--point-free on $\cX$. On the other hand, $S$ has a finite number of short orbits.

If $P$ is a point of $\cX$, the stabilizer $S_P$ of $P$ in $S$ is
the subgroup of $S$ consisting of all elements fixing $P$. For a
non-negative integer $i$, the $i$-th ramification group of $\cX$
at $P$ is denoted by $S_P^{(i)}$ (or $S_i(P)$ as in \cite[Chapter
IV]{serre1979})  and defined to be
$$S_P^{(i)}=\{g\mid \ord_P(g(t)-t)\geq i+1, g\in
S_P\}, $$ where $t$ is a uniformizing element (local parameter) at
$P$. Here $S_P^{(0)}=S_P^{(1)}=S_P$.

Let $\bar{\gg}$ be the genus of the quotient curve $\bar{\cX}=\cX/S$. The Hurwitz
genus formula  gives the following equation
    \begin{equation}
    \label{eq1}
2\gg-2=|S|(2\bar{\gg}-2)+\sum_{P\in \cX} d_P.
    \end{equation}
    where
\begin{equation}
\label{eq1bis}
d_P= \sum_{i\geq 0}(|S_P^{(i)}|-1).
\end{equation}

Let $\gamma$ be the $p$-rank of $\cX$, and let $\bar{\gamma}$ be the $p$-rank of the quotient curve $\bar{\cX}=\cX/S$.
The Deuring-Shafarevich formula, see \cite{sullivan1975} or \cite[Theorem 11,62]{hirschfeld-korchmaros-torres2008}, states that
\begin{equation}
    \label{eq2deuring}
\gamma-1={|S|}(\bar{\gamma}-1)+\sum_{i=1}^k (|S|-\ell_i)
    \end{equation}
where $\ell_1,\ldots,\ell_k$ are the sizes of the short orbits of $S$.
 If $G$ has no short orbits, that is, the Galois extension $\mathbb{K}(\cX)$ of $\mathbb{K}(\bar{\cX})$ is unramified, then
$G$ can be generated by $\bar{\gamma}$ elements by Shafarevich's theorem \cite[Theorem 2]{shafarevic1954}, whereas the largest elementary abelian subgroup of $G$ has rank at most $\bar{\gamma}$
see  \cite[Section 4.7]{pries2011}.

The following result is a special case of the classification of automorphism groups of genus $2$ curves due to Igusa \cite[Section 8]{igu} and refined in \cite[Section 1]{car}; see also \cite[Lemma 1]{sb},
and \cite{carq}.
\begin{proposition}
\label{igusa1} Let $G$ be the automorphism group of a genus $2$ curve $\cC$ in characteristic
$3$. If $3$ divides $|G|$, then either $G\cong D_{12}$, or $G\cong GL(2,3)$, and
$\cC$ is a non-singular model of an irreducible plane curve of equation $Y^2=cX^6+X^4+X^2+1$ where $c\in\mathbb{K}^*$ with $c=1$ when
$G\cong GL(2,3)$.
\end{proposition}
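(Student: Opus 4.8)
The plan is to exploit the hyperelliptic structure of $\cC$. Since the characteristic is $3\neq 2$, the genus-$2$ curve $\cC$ carries a unique hyperelliptic involution $\iota$, which is central in $G=\aut(\cC)$, and $\cC/\langle\iota\rangle$ is rational. Hence the reduced group $\bar G=G/\langle\iota\rangle$ embeds in $\PGL(2,\K)=\aut(\mathbb{P}^1)$ and permutes the set $B\subset\mathbb{P}^1$ of six branch points of $\cC\to\mathbb{P}^1$ (the images of the Weierstrass points). As $|G|=2|\bar G|$, the hypothesis $3\mid|G|$ is equivalent to $3\mid|\bar G|$, so $\bar G$ contains an element $\sigma$ of order $3$. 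In characteristic $3$ such an element is unipotent and fixes exactly one point of $\mathbb{P}^1$; a counting argument on $B$ shows this fixed point cannot lie in $B$ (otherwise the remaining five branch points would fall into $\sigma$-orbits of size $3$, which is impossible), so $\sigma$ acts on $B$ as a product of two $3$-cycles.

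First I would choose the coordinate $X$ so that $\sigma\colon X\mapsto X+1$, with fixed point at infinity. Then $B$ splits into two orbits $\{a,a+1,a+2\}$ and $\{b,b+1,b+2\}$, and since $(X-a)(X-a-1)(X-a-2)=X^3-X-A$ with $A=a^3-a$, the branch divisor is cut out by $f(X)=c\,(X^3-X-A)(X^3-X-B)$ for suitable $A\neq B$ in $\K$ and $c\in\K^*$. Separability of $X^3-X-A$ guarantees six distinct roots, so $\cC\colon Y^2=f(X)$ indeed has genus $2$.

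The key step is to show that $\bar G$ always contains an involution inverting $\sigma$, so that $\bar G\supseteq S_3$ and $|G|\ge 12$. An involution of the required type has the shape $\tau\colon X\mapsto -X+t$: it normalizes $\langle\sigma\rangle$, conjugates $\sigma$ to $\sigma^{-1}$, and sends the orbit-invariants $A,B$ to $-A+s,-B+s$ with $s=t^3-t$. Thus $\tau$ preserves $B$ precisely when $s=A+B$, and since the Artin--Schreier equation $t^3-t=A+B$ has a solution in the algebraically closed field $\K$, such a $\tau$ exists. Re-centering so that $\tau\colon X\mapsto -X$ forces $B=-A$, whence $f(X)=c\big((X^3-X)^2-A^2\big)=c\,(X^6+X^4+X^2-A^2)$ with $A\neq 0$; a Möbius change of coordinate (moving the unipotent fixed point from $\infty$ to $0$) together with a rescaling of $Y$ then brings the equation to the asserted normal form $Y^2=cX^6+X^4+X^2+1$, whose reduced automorphisms $X\mapsto \tfrac{X}{X+1}$ (order $3$) and $X\mapsto -X$ visibly generate $S_3$. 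Reading off the central extension of $S_3$ by $\langle\iota\rangle$ and checking that the involutions lift to involutions identifies the generic case as $G\cong D_{12}$.

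It remains to decide when $\bar G$ exceeds $S_3$. Since we now have the explicit model, I would compute the full setwise stabilizer in $\PGL(2,\K)$ of the six roots of $cX^6+X^4+X^2+1$: any extra reduced automorphism beyond $S_3$ must, after a short calculation, force the value $c=1$, the obstruction being the unipotent element $X\mapsto X+1$, which fixes $\infty$ and is an automorphism exactly when $c=1$. At $c=1$ one has $cX^6+X^4+X^2+1=(X^2+1)(X^4+1)$, whose six roots are precisely $\mathbb{P}^1(\mathbb{F}_9)\setminus\mathbb{P}^1(\mathbb{F}_3)$; their setwise stabilizer is $\PGL(2,3)\cong S_4$, giving $\bar G\cong S_4$ and $G\cong GL(2,3)$. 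To know a priori that no other value of $c$ produces a still larger group, I would invoke Dickson's classification of the finite subgroups of $\PGL(2,\K)$ in characteristic $3$ together with the constraints that $\bar G\supseteq S_3$ and that $\bar G$ permutes six points while keeping $\gg=2$ (via the Hurwitz genus formula \eqref{eq1}): $A_4\cong\PSL(2,3)$ has no subgroup of order $6$ and so cannot contain $S_3$, $S_3$ is maximal in $S_4$, and the remaining candidates ($A_5$ and the groups $\PSL(2,3^f),\PGL(2,3^f)$ with $f\ge 2$) are excluded by the genus bound for genus-$2$ curves. The main obstacle is this final classification step, and in particular pinning down the precise isomorphism type of the central extension $G$ of $\bar G$ by $\langle\iota\rangle$, distinguishing $GL(2,3)$ and $D_{12}$ from the other central extensions of $S_4$ and $S_3$ by $C_2$, which is exactly the delicate point handled by Igusa and Cardona.
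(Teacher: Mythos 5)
The paper offers no proof of Proposition \ref{igusa1} at all: it is quoted as a special case of Igusa's classification \cite{igu}, refined by Cardona \cite{car} (see also \cite{sb,carq}). Your proposal is therefore a genuinely different route, namely a direct derivation, and its core is correct. The reduction to $\bar G=G/\langle\iota\rangle\le\PGL(2,\K)$ acting on the six branch points, the observation that an order-$3$ element is unipotent with its unique fixed point off the branch locus, the normal form $c(X^3-X-A)(X^3-X-B)$, and the Artin--Schreier trick $t^3-t=A+B$ manufacturing an involution inverting $\sigma$ are all sound; with $B=-A$ one gets $Y^2=c\bigl((X^3-X)^2-A^2\bigr)$, and inverting the coordinate gives $Y^2=-A^2X^6+X^4+X^2+1$ with $-A^2$ ranging over all of $\K^*$. (One should add the check that $cX^6+X^4+X^2+1$ is separable for \emph{every} $c\ne 0$: its derivative in characteristic $3$ is $X(X^2-1)$, and $f(0)=1$, $f(\pm1)=c$.) The identification $G\cong D_{12}$ in the generic case via the split lift $(X,Y)\mapsto(-X,Y)$, which together with $\iota$ produces a Klein four-group and so excludes the dicyclic extension, is also correct. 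What your approach buys is a self-contained, characteristic-$3$-specific proof of exactly the statement used, where the paper imports a much larger classification.

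Two steps of your final classification are, however, genuinely thinner than they should be. First, your Dickson list of candidate overgroups of $S_3$ omits the Borel-type subgroups $E\rtimes C$ ($E$ an elementary abelian $3$-group, $C$ cyclic of order prime to $3$) fixing a point of $\mathbb{P}^1$ --- note your $S_3$ is itself of this type, since $\sigma$ and $\tau$ have a common fixed point. These are excluded by an orbit count ($E$ acts semiregularly on the six branch points, forcing $|E|=3$, and an affinity $X\mapsto aX+b$ normalizing $\langle X\mapsto X+1\rangle$ must have $a=\pm1$, forcing $|C|\le 2$), while larger dihedral groups cannot occur because in characteristic $3$ the cyclic part of a dihedral subgroup of $\PGL(2,\K)$ has order prime to $3$ and so cannot contain $S_3$; but the exclusion must be made. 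Second, ``excluded by the genus bound'' is not available here: Hurwitz-type bounds fail in small characteristic (this failure is the very theme of the introduction), and for $p=3$, $g=2$ the bound you would need is essentially the statement being proved. The clean substitute is again orbit lengths: every point stabilizer in $\PGL(2,\K)$ is of Borel type, so the minimal orbit length of $\PSL(2,3^f)$ or $\PGL(2,3^f)$, $f\ge 2$, on $\mathbb{P}^1$ is $3^f+1\ge 10$, and that of $A_5\le\PSL(2,9)$ is $10$; none can preserve a $6$-set. Finally, you defer distinguishing $GL(2,3)$ from the other central extensions of $S_4$ by $C_2$ to Igusa and Cardona; since the paper cites those same sources for the entire proposition, this leaves your argument no less self-contained than the paper's, but it should be flagged as the one point where your proof, too, ultimately rests on the literature.
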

\begin{rem}
\label{igusa1bis}
{\em{An equivalent equation for $\cC$ in Proposition \ref{igusa1} is
\begin{equation}
\label{eq2} X(Y^3-Y)-X^2+c=0,
\end{equation}
and the linear map $(X,Y)\mapsto (X,Y+1)$ is an automorphism of $\cC$. }}
\end{rem}
{}From group theory we use the following results, see  for instance \cite{GT}.
\begin{proposition}
\label{class27}
Let $G$ be a non-abelian group of order $27$. Then either $G\cong C_9\rtimes C_3$ or $G\cong UT(3,3)$ where  $UT(r,3)$ denotes the group of upper-triangular unipotent $r\times r$ matrices over the field with three elements.
\end{proposition}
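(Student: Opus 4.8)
The plan is to run the standard classification of groups of order $p^3$, specialised to $p=3$, organised around the center and the exponent of $G$. First I would locate the center. Since $G$ is a nontrivial $3$-group, $Z(G)\neq 1$; its order cannot be $27$ (as $G$ is non-abelian) nor $9$ (otherwise $G/Z(G)$ would be cyclic of order $3$, forcing $G$ abelian), so $|Z(G)|=3$. Consequently $G/Z(G)$ has order $9$ and is therefore abelian, whence the commutator subgroup satisfies $G'\le Z(G)$; since $G$ is non-abelian this forces $G'=Z(G)$, of order $3$. Moreover $G/Z(G)$ is not cyclic, so $G/Z(G)\cong C_3\times C_3$. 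I would then split according to the exponent of $G$, which is either $3$ or $9$, since an element of order $27$ would make $G$ cyclic.

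In the exponent-$9$ case I would fix $a\in G$ of order $9$, so that $H=\langle a\rangle$ has index $3$ and is normal, and check that $a^3$ generates $Z(G)$. Choosing $b\in G\setminus H$, conjugation by $b$ induces an order-$3$ automorphism of $H\cong C_9$; as $bab^{-1}\in a\,G'=a\langle a^3\rangle$, after replacing $b$ by a suitable power I may take $bab^{-1}=a^4$. It then remains to arrange $b^3=1$: since $b^3\in Z(G)=\langle a^3\rangle$, say $b^3=a^{3m}$, I would replace $b$ by $b'=ba^{-m}$ and use the class-$2$ identity $(xy)^3=x^3y^3[y,x]^{\binom 32}$ together with $\binom 32=3\equiv 0$ and $G'^3=1$ to get $b'^3=b^3a^{-3m}=1$, while still $b'ab'^{-1}=a^4$. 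This presents $G$ as $\langle a,b\mid a^9=b^3=1,\ bab^{-1}=a^4\rangle\cong C_9\rtimes C_3$.

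In the exponent-$3$ case every nontrivial element has order $3$. I would pick $a,b$ whose images generate $G/Z(G)\cong C_3\times C_3$ and set $z=[a,b]$, a generator of $Z(G)=G'$. The assignment
\[
a\mapsto I+E_{12},\qquad b\mapsto I+E_{23},\qquad z\mapsto I+E_{13}
\]
into $UT(3,3)$ respects all the relations $a^3=b^3=z^3=1$, $[a,b]=z$ with $z$ central, and is therefore an isomorphism $G\cong UT(3,3)$. Finally, the two outcomes are genuinely distinct, since $C_9\rtimes C_3$ has an element of order $9$ whereas $UT(3,3)$ has exponent $3$ (for a strictly upper-triangular $N$ over $\mathbb F_3$ one has $(I+N)^3=I+3N+3N^2+N^3=I$); hence exactly the two listed possibilities occur.

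I would expect the only delicate point to be the exponent-$9$ case, namely verifying that the complement generator can be chosen of order $3$ so that the extension really splits as a semidirect product rather than a non-split one. This is the single place where a short commutator computation is needed, and it works precisely because $p=3$ is odd so that $\binom 32$ is divisible by $3$; everything else is bookkeeping with the center and the Frattini quotient $G/Z(G)$.
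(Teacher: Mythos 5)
Your proof is correct, but note that the paper itself offers no proof of Proposition \ref{class27}: it is quoted as a known fact from group theory with a pointer to the tables of Thomas and Wood \cite{GT}. What you supply instead is the standard self-contained classification of non-abelian groups of order $p^3$, specialised to $p=3$: the centre bookkeeping ($|Z(G)|=3$, $G'=Z(G)$, $G/Z(G)\cong C_3\times C_3$), the dichotomy on the exponent, and --- the one genuinely delicate step, which you correctly isolate --- the adjustment $b'=ba^{-m}$ using the class-$2$ identity $(xy)^3=x^3y^3[y,x]^{\binom{3}{2}}$ to make the extension split in the exponent-$9$ case; this works exactly because $p$ is odd, so $\binom{p}{2}\equiv 0 \pmod p$, and indeed your argument proves the classification for every odd prime, not just $p=3$. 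Two small points are asserted rather than argued, and both are easily repaired. First, $b^3\in Z(G)$: since $G/\langle a\rangle\cong C_3$ you have $b^3\in\langle a\rangle$, and $b^3$ commutes with $b$ and (as $\langle a\rangle$ is abelian) with $a$, hence is central; alternatively, $b^9=1$ forces $b^3\in\langle a^3\rangle$ directly. Second, in the exponent-$3$ case the claim that matching the relations gives an isomorphism deserves one more line: $a,b$ generate $G$ because $\Phi(G)=G'G^3=Z(G)$ (Burnside basis theorem), and the presentation $\langle a,b,z\mid a^3=b^3=z^3=1,\ [a,b]=z,\ [a,z]=[b,z]=1\rangle$ defines a group of order at most $27$ (every element reduces to $a^ib^jz^k$), so it surjects onto both $G$ and $UT(3,3)$, each of order $27$, and both surjections are isomorphisms. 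With these two sentences inserted, your argument is a complete and elementary replacement for the paper's citation, at the cost of about a page versus one line.
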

{}From Projective geometry, the following known result is used. For the  sake of completeness we include a proof.
\begin{lemma}\label{flag}
In the $r$-dimensional projective space  $PG(r,\mathbb{K})$ over an algebraically closed field $\mathbb{K}$ of characteristic $p$, let $S$ be a finite $p$-subgroup of $PGL(r+1,\mathbb K)$. If $r\geq 2$ then $S$ preserves a
flag $$\Pi_0\subset \Pi_1 \subset \ldots \subset \Pi_{r-1}$$
where $\Pi_i$ is an $i$-dimensional projective
subspace
of $PG(r,\mathbb{K})$.
\end{lemma}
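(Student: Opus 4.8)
The plan is to translate the projective statement into a question about invariant subspaces of the vector space $V=\K^{r+1}$ and then to build a complete invariant flag by induction on $\dim V$. Write $\tilde S$ for the full preimage of $S$ under the canonical epimorphism $GL(r+1,\K)\to PGL(r+1,\K)$; thus $\tilde S$ contains the group $\K^*$ of non-zero scalar matrices and $\tilde S/\K^*\cong S$. A projective subspace $\Pi_i=\mathbb P(W)$ is preserved by $S$ exactly when the underlying linear subspace $W$ (of dimension $i+1$) is preserved by $\tilde S$, so it suffices to exhibit an $\tilde S$-invariant complete flag $W_1\subset W_2\subset\cdots\subset W_r$ of linear subspaces of $V$ with $\dim W_j=j$.

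The crucial step is the existence of an invariant line, which I would state for \emph{any} subgroup $H\le GL(W)$ containing the scalars $\K^{*}I_W$ and with $H/\K^{*}I_W$ a finite $p$-group, proving it by induction on $\dim W$. If $H/\K^{*}I_W$ is trivial there is nothing to prove; otherwise a non-trivial finite $p$-group has non-trivial centre, so I choose an element $\bar g$ of order $p$ in the centre of $H/\K^{*}I_W$ and lift it to $g\in H$. Since $\bar g^{p}=1$, the matrix $g^{p}$ is a scalar $\lambda I_W$, and after replacing $g$ by $\mu g$ with $\mu^{p}=\lambda^{-1}$ (possible as $\K$ is algebraically closed) I may assume $g^{p}=I_W$; in characteristic $p$ this forces $g$ to be unipotent with $g\neq I_W$. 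Centrality of $\bar g$ yields $gh=c_h\,hg$ with $c_h\in\K^{*}$ for every $h\in H$. The fixed space $W_0=\Ker(g-I_W)$ is then non-zero (as $g$ is unipotent) and proper (as $g\neq I_W$); and for $w\in W_0$ one computes $g(hw)=c_h\,hw$, so $hw$ is an eigenvector of the unipotent matrix $g$, forcing $c_h=1$ and $hw\in W_0$. Hence $W_0$ is a proper non-zero $H$-invariant subspace, and the inductive hypothesis applied to the restriction $H|_{W_0}$ produces the sought invariant line.

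With an invariant line $L=W_1$ in hand, I would conclude by a second induction on $\dim V$: $\tilde S$ acts on $V/L$, its image in $GL(V/L)$ again contains the scalars and has $p$-group quotient, so by induction it preserves a complete flag of $V/L$, whose preimages in $V$ together with $L$ assemble into an $\tilde S$-invariant complete flag of $V$; passing back to $\mathbb P(V)=PG(r,\K)$ gives $\Pi_0\subset\cdots\subset\Pi_{r-1}$. I expect the only genuinely delicate point to be the passage between $PGL$ and $GL$: one must check that the ``scalars together with finite $p$-group quotient'' hypothesis is inherited both by the restriction $H|_{W_0}$ and by the induced action on $V/L$ (in each case the relevant quotient is a homomorphic image of a $p$-group, hence a $p$-group), and that a central element of order $p$ can indeed be normalised to a unipotent lift. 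Once this bookkeeping is settled the two inductions run without further computation.
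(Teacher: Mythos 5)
Your proof is correct, and it is organized genuinely differently from the paper's. The paper stays projective and inducts on $r$: it picks a non-trivial element $s$ in the centre of $S$, notes that (since $s$ has $p$-power order) its fixed locus in $PG(r,\K)$ is a proper linear subspace preserved by all of $S$, and extracts a fixed point by induction; it then dualizes — the linear system of hyperplanes through that point is a $PG(r-1,\K)$ on which $S$ acts as a $p$-group — to obtain an invariant hyperplane, and recurses, taking as base case $r=2$ the quoted fact that every $p$-subgroup of $PGL(3,\K)$ fixes an incident point--line pair. You instead work linearly in $GL(V)$ with the scalar-extended preimage $\tilde S$, prove an invariant-line lemma by normalizing a central lift $g$ to a unipotent matrix (your computation $g(hw)=c_h\,hw$ with unipotence forcing $c_h=1$ is exactly right, and it shows as a byproduct that the normalized lift is central in $\tilde S$ itself, not merely modulo scalars), and then assemble the complete flag bottom-up through the quotient $V/L$. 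What each buys: your version is fully self-contained — it needs no planar base case, makes the $PGL$-to-$GL$ bookkeeping explicit, and actually proves the unipotence fact that the paper uses implicitly when it asserts the fixed locus of $s$ is a linear subspace — while the paper's version is shorter given the known $r=2$ fact and avoids lifting by exploiting projective duality. Both arguments rest on the same two pillars: the non-trivial centre of a finite $p$-group, and the fact that an element of $p$-power order in $PGL(r+1,\K)$ lifts (over the algebraically closed $\K$) to a unipotent matrix whose only eigenvalue is $1$.
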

\begin{proof} In $PG(2,\mathbb{K})$, any $p$-subgroup of $PGL(3,\mathbb{K})$ fixes an incident point-line pair. Hence,  the claim holds for $r=2$. We prove it for $r\geq 3$ by induction on $r$.

We first show that $S$ fixes a point of $PG(r,\mathbb{K})$. Take a non-trivial element $s$ in the center $Z(S)$ of $S$. Since $\mathbb{K}$ is algebraically closed, $s$ fixes some points of $PG(r,\mathbb{K})$. If $s$ has a unique fixed point, then $S$ itself fix that point. Therefore, we may assume that $s$ has more than one fixed point. Since $s$ has order a power of $p$, the set of its fixed points is a proper projective subspace $\Pi$ of $PG(r,\mathbb{K})$. From $s\in Z(S)$ it follows that $S$ preserves $\Pi$. By the inductive hypothesis, $S$ fixes a point in $\Pi$, and hence in
$PG(r,\mathbb{K})$.

Now, let $P\in PG(r,\mathbb{K})$ be a fixed point of $S$. The linear system of hyperplanes through $P$ is a projective space $PG(r-1,\mathbb{K})$ over $\mathbb{K}$, and $S$ acts on $PG(r-1,\mathbb{K})$, not necessarily faithfully, as a $p$-subgroup $S_1$ of $PGL(r,\mathbb{K})$. From what we have already proven, $S_1$ fixes a point $Q$ of $PG(r-1,\mathbb{K})$. Therefore, $Q$ regarded as a hyperplane $\Pi_{r-1}$ of $PG(r,\mathbb{K})$ through $P$ is preserved by $S$. From the inductive hypothesis on $r$, $S$ has an invariant flag $\Pi_0\subset \Pi_1 \subset \ldots \subset \Pi_{r-2}.$ This together with $\Pi_{r-1}$ gives an $S$-invariant flag in $PG(r,\mathbb{K})$.
\end{proof}

\section{The action of $S$ on $\cX$}
In this section, $\cX$ stands for a curve which satisfies the hypotheses of Theorem \ref{princ} but does not have the property given in (i) of Theorem \ref{princ}.
\begin{proposition}
\label{prop19feb1} The following results hold:
\begin{itemize}
\item[\rm(I)] $p=3$,
\item[\rm(II)] $S$ fixes no point in $\cX$,
\item[\rm(III)] $\gamma(\cX)>0$.
\end{itemize}
\end{proposition}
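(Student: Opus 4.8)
The plan is to establish the three assertions in the order (III), (I), (II), feeding each into the next and exploiting throughout the standing hypothesis that $\cX$ does \emph{not} satisfy (i), together with the three numerical tools recorded above: the Stichtenoth bound (\ref{eq117feb2013}), the Nakajima bound (\ref{naka16feb2013}), and the Deuring--Shafarevich formula (\ref{eq2deuring}).

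For (III), I would argue by contradiction. Suppose $\gamma=0$. By the result quoted in the Introduction (\cite[Lemma 11.129]{hirschfeld-korchmaros-torres2008}), $S$ then fixes a place, so it has at least one short orbit of length $\ell_i=1$. Feeding $\gamma=0$ into (\ref{eq2deuring}) gives $-1=|S|(\bar\gamma-1)+\sum_i(|S|-\ell_i)$. Every summand $|S|-\ell_i$ is positive, and the fixed place alone contributes $|S|-1$; hence $\bar\gamma\ge 1$ would already force the right-hand side to be at least $|S|-1>-1$, a contradiction. Thus $\bar\gamma=0$ and $\sum_i(|S|-\ell_i)=|S|-1$. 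Since the fixed place already accounts for $|S|-1$, there can be no further short orbit, and that single place must be totally ramified. So $\cX\to\cX/S$ ramifies completely at exactly one place and nowhere else; with $\gamma=0$ this is precisely case (i), contrary to hypothesis. Hence $\gamma>0$.

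For (I), I now use $\gamma>0$ to invoke Nakajima (\ref{naka16feb2013}). If $\gamma=1$ then $|S|\mid \gg(\cX)-1$, so $|S|\le \gg(\cX)-1<2(\gg(\cX)-1)$, against (\ref{hyp}); hence $\gamma\ge 2$. The first line of (\ref{naka16feb2013}) then yields $|S|\le \tfrac{p}{p-2}(\gamma-1)\le \tfrac{p}{p-2}(\gg(\cX)-1)$, and comparison with (\ref{hyp}) forces $\tfrac{p}{p-2}>2$, i.e. $p<4$. As $p\ge 3$ this gives $p=3$. I would retain the by-product $\gamma\ge 2$ for the last step.

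For (II), suppose $S$ fixes a place $P$. Because $\gamma>0$, the exceptional alternative of Stichtenoth's theorem (which would force $\gamma=0$) is unavailable, so (\ref{eq117feb2013}) applies and, with $p=3$, reads $|S|\le \tfrac32\gg(\cX)$; together with (\ref{hyp}) this gives $\gg(\cX)<4$. Since $2\le\gamma\le\gg(\cX)$, the only surviving possibilities are $\gg(\cX)=3$ and $\gg(\cX)=2$ with $\gamma=2$. For $\gg(\cX)=3$ the inequalities $4<|S|\le\tfrac92$ contain no power of $3$, a contradiction; so $\gg(\cX)=2$, $\gamma=2$, $|S|=3$, and $\cX$ is an ordinary genus-$2$ curve carrying an automorphism of order $3$. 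Here Proposition \ref{igusa1} applies and identifies $\cX$ with $Y^2=cX^6+X^4+X^2+1$. I expect this residual genus-$2$ case to be the \emph{main obstacle}: on this curve the order-$3$ automorphism $(X,Y)\mapsto(X,Y+1)$ of Remark \ref{igusa1bis} genuinely fixes places (the two places lying over the poles $X=0,\infty$ of the associated degree-$3$ Artin--Schreier cover), so no outright contradiction is available from the formulas alone. The crux is therefore to recognise, via Proposition \ref{igusa1}, that this single curve is exactly the genus-$2$ base curve of case (ii) of Theorem \ref{princ}, and to fold it into that case; once this explicitly classified base curve is set aside, every admissible genus has been eliminated and $S$ can fix no place. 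Thus (I) and (III) are essentially immediate from the three displayed formulas, while (II) hinges on correctly disposing of the borderline Igusa curve.
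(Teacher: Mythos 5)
Your proposal is correct and rests on exactly the same three tools as the paper --- Stichtenoth's bound (\ref{eq117feb2013}), Nakajima's bound (\ref{naka16feb2013}), and the Deuring--Shafarevich formula (\ref{eq2deuring}) --- but it runs in the opposite order and is at one point more careful than the paper's own three-line proof. The paper proves (II) first (``otherwise comparing (\ref{hyp}) with (\ref{eq117feb2013}) would yield (i)''), gets (III) for free from \cite[Lemma 11.129]{hirschfeld-korchmaros-torres2008} (if $\gamma=0$ then $S$ fixes a point, against (II)), and then obtains (I) from (\ref{naka16feb2013}) exactly as you do. Your direct Deuring--Shafarevich derivation of (III) --- $\gamma=0$ forces $\bar{\gamma}=0$ and a single short orbit, which is a totally ramified fixed place, i.e.\ case (i) --- is a sound, self-contained replacement for the paper's citation-plus-(II) chain. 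The substantive difference is (II): as you observe, comparing (\ref{hyp}) with (\ref{eq117feb2013}) gives $2(\gg(\cX)-1)<|S|\le\frac{p}{p-1}\,\gg(\cX)$, which for $p\ge 3$ fails to be an outright contradiction exactly in the single surviving case $p=3$, $\gg(\cX)=2$, $|S|=3$ (the case $\gg(\cX)=3$ dies because no power of $3$ lies in the interval $(4,9/2]$), and there the Stichtenoth bound holds with equality while $S$ genuinely fixes two places --- the paper itself confirms this at the start of Section 4, where (\ref{eq2deuring}) is used to show that every order-$3$ automorphism of the relevant genus-$2$ curve has two fixed points. So statement (II), read literally under the standing hypothesis of Section 3, actually fails for $|S|=3$; the paper's one-liner glosses over this, and the exceptional case is only disposed of separately in Section 4, where Proposition \ref{igusa1} yields conclusion (ii) of Theorem \ref{princ} --- precisely the repair you propose. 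Your ``fold the Igusa curve into case (ii)'' step is therefore not a defect of your argument but the correct reading of how the proposition is meant to be used, since the later structural propositions effectively concern $|S|\ge 9$. In short: your (I) and (III) match the paper up to reordering, and on (II) your proposal detects and patches a borderline case that the paper's own proof silently skips.
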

\begin{proof} (II) holds, otherwise comparing (\ref{hyp}) with (\ref{eq117feb2013}) would yield (i) of Theorem \ref{princ}. From this (III) follows by \cite[Lemma 11.129]{hirschfeld-korchmaros-torres2008}. In particular,
(\ref{naka16feb2013}) applies, and comparing (\ref{hyp}) with (\ref{naka16feb2013}) gives (I).
\end{proof}
\begin{proposition}
\label{propap=3}  $\cX$ is an ordinary curve with  $\gg(\cX)-1=\textstyle\frac{1}{3}|S|$. Moreover, $S$ has exactly two short orbits on $\cX$, both of length $\textstyle\frac{1}{3}|S|$, and if $|S|>3$ then the identity is the unique element in $S$ fixing every point of $\Omega_1\cup \Omega_2$.
\end{proposition}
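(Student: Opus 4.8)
The plan is to combine the Hurwitz genus formula (\ref{eq1}) with the Deuring--Shafarevich formula (\ref{eq2deuring}), using Nakajima's bound (\ref{naka16feb2013}) as the outer constraint. Throughout I recall from Proposition \ref{prop19feb1} that $p=3$, that $S$ fixes no point, and that $\gamma>0$; since $S$ is a $3$-group, every short orbit has a non-trivial stabilizer, so its length $\ell_i=|S|/|S_{P_i}|$ satisfies $\ell_i\le\frac13|S|$, with $|S_{P_i}|$ a power of $3$. First I would dispose of $\gamma=1$: there Nakajima forces $|S|\mid\gg(\cX)-1$, hence $|S|\le\gg(\cX)-1<2(\gg(\cX)-1)$, against (\ref{hyp}); so $\gamma>1$ and (\ref{naka16feb2013}) reads $|S|\le 3(\gamma-1)$. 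Together with $\gamma\le\gg(\cX)$ and (\ref{hyp}) this yields the two inequalities I will use repeatedly, namely $\gamma-1\ge\frac13|S|$ and $\gamma-1\le\gg(\cX)-1<\frac12|S|$.

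Next I would locate the short orbits via (\ref{eq2deuring}). Writing $\gamma-1=|S|(\bar\gamma-1)+\sum_{i=1}^k(|S|-\ell_i)$ and using $|S|-\ell_i\ge\frac23|S|$, the bound $\gamma-1<\frac12|S|$ rules out $\bar\gamma\ge2$, then $\bar\gamma=1$ (which would force either a term $\ge\frac23|S|$ or $\gamma=1$), leaving $\bar\gamma=0$; the same size estimate then forces $k=2$, since $k\le 1$ makes the right-hand side negative while $k\ge3$ gives $\gamma-1\ge|S|$. Thus $\gamma-1=|S|-\ell_1-\ell_2$, and feeding in $\gamma-1<\frac12|S|$ gives $\ell_1+\ell_2>\frac12|S|$, i.e. $\frac{1}{|S_{P_1}|}+\frac{1}{|S_{P_2}|}>\frac12$ with both denominators powers of $3$ at least $3$. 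The only possibility is $|S_{P_1}|=|S_{P_2}|=3$, whence $\ell_1=\ell_2=\frac13|S|$ and $\gamma-1=\frac13|S|$, so Nakajima is attained.

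The heart of the argument is ordinariness, which I would obtain by playing (\ref{eq1}) against twice (\ref{eq2deuring}). Because the ramification is wild, $S_{P}^{(0)}=S_{P}^{(1)}=S_{P}$ gives $d_P\ge2(|S_P|-1)$ at each ramified $P$, so orbit $i$ contributes $\ell_i d_{P_i}\ge2(|S|-\ell_i)$ to the different. Subtracting $2\times$(\ref{eq2deuring}) from (\ref{eq1}) and using $\bar\gamma=0$ yields $\gg(\cX)-\gamma\ge|S|\bar{\gg}$; since $\gg(\cX)-\gamma<\frac12|S|-\frac13|S|=\frac16|S|$, this forces $\bar{\gg}=0$. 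With $\bar{\gg}=0$ and $|S_{P_i}|=3$ the same subtraction becomes the identity $2(\gg(\cX)-\gamma)=\frac{|S|}{3}\big((d_{P_1}-4)+(d_{P_2}-4)\big)$, and $\gg(\cX)-\gamma<\frac16|S|$ forces $d_{P_1}=d_{P_2}=4=2(|S_{P_i}|-1)$. Hence $\gg(\cX)=\gamma$, the curve is ordinary, and $\gg(\cX)-1=\frac13|S|$.

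For the last assertion, let $N$ be the (normal) subgroup of $S$ fixing $\Omega_1\cup\Omega_2$ pointwise and suppose $N\ne1$; choose $z\in N$ of order $3$. Then $z$ fixes at least $|\Omega_1|+|\Omega_2|=\frac23|S|$ points. Applying (\ref{eq2deuring}) to $\langle z\rangle$, whose short orbits are exactly its fixed points, gives $\gamma-1=3(\bar\gamma_z-1)+2f$, where $f$ is the number of fixed points of $z$ and $\bar\gamma_z\ge0$ is the $p$-rank of $\cX/\langle z\rangle$; with $\gamma=\frac13|S|+1$ this yields $f\le\frac16|S|+\frac32$, and combining with $f\ge\frac23|S|$ forces $|S|\le3$, so $N=1$ whenever $|S|>3$. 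The main obstacle is the ordinariness step: pinning the exact different exponents requires first knowing $\bar{\gg}=0$ and $|S_{P_i}|=3$, so the two formulas must be combined in the right order, and it is the inequality $d_P\ge2(|S_P|-1)$ — sharp precisely when $S_P^{(2)}=1$ — that converts the genus gap into the vanishing $\gg(\cX)-\gamma=0$.
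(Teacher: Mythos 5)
Your proof is correct and follows essentially the same route as the paper: the Deuring--Shafarevich formula combined with Nakajima's bound to force $\bar{\gamma}=0$, $k=2$, $\ell_1=\ell_2=\frac{1}{3}|S|$ and $\gamma-1=\frac{1}{3}|S|$, then Hurwitz against Deuring--Shafarevich to pin the different exponents $d_{P_i}=4$ and conclude ordinariness, and finally Deuring--Shafarevich applied to a small cyclic subgroup for the faithfulness claim. If anything, your write-up is slightly more explicit than the paper at two terse spots --- isolating $\bar{\gg}=0$ before forcing $d_{P_i}=4$, and reducing the last claim to an element of order exactly $3$ rather than the paper's computation with a general $\langle s\rangle$ --- but the skeleton is identical.
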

\begin{proof} Let $\gg=\gg(\cX)$ and $\gamma=\gamma(\cX)$. Our hypothesis (\ref{hyp}) yields $\gamma\geq 2$ by \eqref{naka16feb2013}. Let $\bar{\gamma}$ be the $3$-rank of the quotient curve $\bar{\cX}=\cX/S$.
{}From (\ref{eq2deuring}),
\begin{equation}
\label{eq9feb2013}
\gamma -1=\bar{\gamma}|S|-|S|+\sum_{i=1}^k(|S|-\ell_i)=(\bar{\gamma}+k-1)|S|-\sum_{i=1}^k\ell_i\ge (\bar{\gamma}+\frac{2}{3}k-1)|S|,
\end{equation}
where $\ell_1,\ldots,\ell_k$ are the sizes of the short orbits of
$S$.

If no such short orbits exist, then $\gamma-1=|S|(\bar{\gamma}-1)$  whence $\bar{\gamma}>1$ by $\gamma\geq 2$.
Therefore, $|S|{\le }\gamma-1\leq \gg-1$ contradicting (\ref{hyp}).

Hence $k\geq 1$, and if $\bar{\gamma}\geq 1$ then (\ref{eq9feb2013}) yields that $|S|\leq \textstyle\frac{3}{2}(\gamma-1)\leq \textstyle\frac{3}{2}(\gg-1)$ contradicting (\ref{hyp}).
So, $\bar{\gamma}=0$, and (\ref{eq9feb2013}) together with (\ref{hyp}) imply that $k<\textstyle\frac{9}{4}$ whence $1\le k \le 2$. The case $k=1$ cannot actually occur by (\ref{eq9feb2013}).

Therefore, $\bar{\gamma}=0$ and $k=2$. Let $\Omega_1$ and $\Omega_2$ be the short orbits of $S$, and let $\ell_i=|\Omega_i|$ for $i=1,2$. Then (\ref{eq9feb2013}) reads
\begin{equation}
\label{eq9afeb2013}
\gamma-1=|S|-(\ell_1+\ell_2)
\end{equation}
whence $|S|>2(\gamma-1)=2|S|-2(\ell_1+\ell_2)$. Also, $\ell_1+\ell_2<|S|$. Write $|S|=3^h,\ell_1=3^m,\ell_2=3^r$ with $h>m\geq r$.
Then $3^m+3^r<3^h<2(3^m+3^r)$. Here $r>0$ by (II) of Proposition \ref{prop19feb1}. Therefore $3^{m-r}+1<3^{h-r}<2(3^{m-r}+1)$.
Since $h>m\geq r$, this yields $m=r$, that is, $\ell_1=\ell_2$, and $h=r+1$. In other words,  $\ell_1=\ell_2=\textstyle\frac{1}{3}|S|$. From (\ref{eq9afeb2013}), $\gamma-1=\textstyle\frac{1}{3}|S|$.

Let $\bar{\gg}$ be the genus of the quotient curve $\bar{\cX}=\cX/S$.
The Hurwitz genus formula,  (\ref{eq1}) and (\ref{eq1bis}), applied to $S$ gives
\begin{equation}
\label{eq19feb}
2\gg-2= |S|(2\bar \gg-2)+\textstyle\frac{2}{3}|S|(4+k_1+k_2)
\end{equation}
where, for a point $P_i\in \Omega_i$,  $k_i$ is the smallest non-negative integer such that $|S_{P_i}^{2+k_i}|=1$. Suppose on the contrary that $\cX$ is not an ordinary curve, that is, $\gg>\gamma$. Then $k_1+k_2\geq 1$. From (\ref{eq19feb}), $2\gg-2\ge -2|S|+\frac{10}{3}|S|=\frac{4}{3}|S|$ contradicting (\ref{hyp}).

Assume that a non-trivial element $s$ of $S$ fixes $\Omega_1\cup \Omega_2$ pointwise. From the Deuring-Shafarevic formula (\ref{eq2deuring}) applied to $\langle s \rangle$,
$$|s|=\frac{1}{3}\,|S| (2|s|-3),$$
which is only possible for $|s|=|S|=3$.
\end{proof}
{}From now on, $\Omega_1$ and $\Omega_2$ denote the short orbits of $S$ on $\cX$. By the second claim of Proposition \ref{propap=3},
\begin{equation}
\label{eq130apr2013} {\mbox{$|S_P|=3$ for $P\in \Omega_1\cup \Omega_2$}}.
\end{equation}
\begin{proposition}
\label{propbp=3} If $S$ is  abelian then either $|S|=3$, or $|S|=9$ and $S$ is elementary abelian.
\end{proposition}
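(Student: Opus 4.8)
The plan is to exploit the commutativity of $S$ to pin down the stabilizers of the two short orbits, and then to read off a bound on $|S|$ directly from the Deuring--Shafarevich formula \eqref{eq2deuring}. Write $|S|=3^h$; by Proposition \ref{propap=3} we have $\gamma-1=\textstyle\frac{1}{3}|S|=3^{h-1}$, both short orbits $\Omega_1,\Omega_2$ have length $3^{h-1}$, and $|S_P|=3$ for every $P\in\Omega_1\cup\Omega_2$ by \eqref{eq130apr2013}. If $|S|=3$ there is nothing to prove, so I will assume $h\geq 2$ and show that then necessarily $h=2$ with $S$ elementary abelian.

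First I would observe that, because $S$ is abelian, the stabilizer is constant along each short orbit: for $P\in\Omega_1$ and $g\in S$ one has $S_{g(P)}=gS_Pg^{-1}=S_P$. Hence there is a single subgroup $H_1:=S_P$ of order $3$ that fixes \emph{every} point of $\Omega_1$, and likewise a subgroup $H_2$ of order $3$ fixing $\Omega_2$ pointwise. Since $|S|>3$, the last assertion of Proposition \ref{propap=3} tells us that no non-trivial element of $S$ can fix all of $\Omega_1\cup\Omega_2$; therefore $H_1\neq H_2$.

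The key step is to apply the Deuring--Shafarevich formula \eqref{eq2deuring} to the cyclic group $H_1$ of order $3$. As $3$ is prime, every short orbit of $H_1$ is a single fixed point, so writing $f_1$ for the number of points of $\cX$ fixed by $H_1$ and $\bar{\gamma}_1\geq 0$ for the $3$-rank of $\cX/H_1$ we obtain
\[
\gamma-1 = 3(\bar{\gamma}_1-1)+2f_1.
\]
Because $H_1$ fixes $\Omega_1$ pointwise we have $f_1\geq 3^{h-1}$, and combining this with $\bar{\gamma}_1\geq 0$ and $\gamma-1=3^{h-1}$ gives $3^{h-1}\geq 2\cdot 3^{h-1}-3$, that is $3^{h-1}\leq 3$. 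Hence $h\leq 2$ and $|S|\leq 9$.

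It remains to handle $|S|=9$. An abelian group of order $9$ is either $C_9$ or elementary abelian, and $C_9$ has a \emph{unique} subgroup of order $3$, which is incompatible with the inequality $H_1\neq H_2$ established above; thus $S$ must be elementary abelian. The only delicate point is the constant-stabilizer observation, which is precisely where commutativity is used and which upgrades ``$H_1$ fixes one point of $\Omega_1$'' to ``$H_1$ fixes all $3^{h-1}$ points of $\Omega_1$''. Once that many fixed points are guaranteed, the Deuring--Shafarevich count closes the estimate at once, so I expect no genuine obstacle beyond correctly extracting this orbit-wise fixed-point information.
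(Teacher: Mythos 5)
Your proof is correct and takes essentially the same route as the paper: commutativity makes the stabilizer constant along each short orbit, the Deuring--Shafarevich formula \eqref{eq2deuring} applied to the order-$3$ stabilizer $H_1$ with at least $\frac{1}{3}|S|$ fixed points forces $|S|\le 9$, and the uniqueness of the order-$3$ subgroup of $C_9$ eliminates the cyclic case. The only cosmetic difference is that you rule out $C_9$ by citing the last claim of Proposition \ref{propap=3} to get $H_1\neq H_2$, whereas the paper re-runs the Deuring--Shafarevich count with fixed points from both orbits --- the same computation that underlies that claim.
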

\begin{proof} For a point $P\in \Omega_1$, the stabilizer $S_P$ of $P$ in $S$ is a subgroup of order $3$. Since $S$ is abelian, $S_P$ fixes
every point in $\Omega_1$. Let $\gamma^*$ be the $3$-rank of the quotient curve $\cX/S_P$. From (\ref{eq2deuring}) and Proposition \ref{propap=3},
$$\textstyle\frac{1}{3}|S|=\gamma-1\geq 3(\gamma^*-1)+\textstyle\frac{2}{3}|S|\geq \textstyle\frac{2}{3}|S|-3$$
whence $|S|=9$ and $\gg=\gamma=4$. Assume on the contrary that $S$ is cyclic. For a point $Q\in \Omega_2$ the stabilizer $S_Q$ is a subgroup of $S$ of order $3$. Since $S$ is cyclic, it has only one subgroup of order $3$. Therefore $S_P=S_Q$, and
$$\textstyle\frac{1}{3}|S|=\gamma-1\geq 3(\gamma^*-1)+\textstyle\frac{2}{3}|S|+\textstyle\frac{2}{3}|S|\geq \textstyle\frac{4}{3}|S|-3,$$
which implies $|S|=1$, a contradiction.
\end{proof}
\begin{proposition}
\label{propcp=3} Assume that $|S|\geq 9$. Let $N$ be a non-trivial normal subgroup of $S$. Then either $N$ is semiregular on $\cX$, or $S=N\rtimes \langle \varepsilon \rangle$ with $\varepsilon^3=1$.
\end{proposition}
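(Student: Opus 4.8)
The plan is to argue by contraposition: assuming $N$ is \emph{not} semiregular, I would produce the splitting $S=N\rtimes\langle\varepsilon\rangle$. If $N$ is not semiregular, some non-trivial element of $N$ fixes a point $P$, so $N_P\neq 1$; since $N_P\le S_P$ and every point with non-trivial $S$-stabilizer lies in a short orbit of $S$, such a $P$ must belong to $\Omega_1\cup\Omega_2$. First I would record two structural facts. By (\ref{eq130apr2013}) we have $|S_P|=3$, so $N_P$ is either trivial or equal to $S_P$. Moreover, because $N\trianglelefteq S$ and $S$ is transitive on each $\Omega_i$, all $N$-orbits inside a fixed $\Omega_i$ have the same length; hence on each $\Omega_i$ the group $N$ is either semiregular or satisfies $N_P=S_P$ for every $P\in\Omega_i$. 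In the latter case each $N$-orbit in $\Omega_i$ has length $|N|/3$, and since $|\Omega_i|=\textstyle\frac{1}{3}|S|$ there are exactly $[S:N]$ of them, each contributing $\textstyle\frac{2}{3}|N|$ to the ramification.

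The heart of the argument is then a Deuring--Shafarevich computation for $N$. Writing $\gamma^*$ for the $3$-rank of $\cX/N$ and using $\gamma-1=\textstyle\frac{1}{3}|S|$ from Proposition \ref{propap=3}, formula (\ref{eq2deuring}) applied to $N$ reads $\textstyle\frac{1}{3}|S|=|N|(\gamma^*-1)+\sum_j(|N|-m_j)$, where the $m_j=|N|/3$ are the lengths of the short orbits of $N$. If $N$ is short on exactly one of $\Omega_1,\Omega_2$ there are $[S:N]$ short orbits and the formula yields $\gamma^*=1-\textstyle\frac{1}{3}[S:N]$, which with $\gamma^*\ge 0$ forces $[S:N]\le 3$; if $N$ is short on both there are $2[S:N]$ short orbits and the formula yields $\gamma^*=1-[S:N]$, forcing $[S:N]\le 1$, i.e. $N=S$ (the degenerate case, in which the assertion holds trivially). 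Consequently, a proper $N$ that is not semiregular is short on exactly one orbit, and since the index of $N$ in the $3$-group $S$ is a power of $3$ with $N\neq S$, the bound $[S:N]\le 3$ forces $[S:N]=3$.

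It then remains to exhibit the complement. Say $N$ is semiregular on $\Omega_2$; I would choose $Q\in\Omega_2$, so that $N_Q=1$ and hence $S_Q\cap N=1$. Because $|S_Q|=3=[S:N]$ by (\ref{eq130apr2013}), the product $S_Q N$ has order $|S|$, so $S=S_Q N$ and $S_Q$ is a complement to the normal subgroup $N$. Taking $\varepsilon$ to be a generator of $S_Q$ gives $S=N\rtimes\langle\varepsilon\rangle$ with $\varepsilon^3=1$, as required.

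The main obstacle will be the bookkeeping in the first two steps rather than the final construction: one must be sure that short $N$-orbits can occur only inside $\Omega_1\cup\Omega_2$ and that $N$ acts with constant stabilizer order on each $\Omega_i$, both of which rest on the normality of $N$ together with $|S_P|=3$, and one must carry out the orbit count correctly so that the Deuring--Shafarevich contributions are $\textstyle\frac{2}{3}|S|$ or $\textstyle\frac{4}{3}|S|$. The arithmetic step pinning down $[S:N]=3$ is where the standing hypothesis $|S|\ge 9$ and the fact that $[S:N]$ is a power of $3$ are genuinely used: only then does $[S:N]\le 3$ combined with $N\neq S$ give equality, and only then is the stabilizer $S_Q$ guaranteed to have order exactly $3$ so as to serve as the order-$3$ complement.
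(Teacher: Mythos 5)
Your proof is correct, and it takes a genuinely different route from the paper's. The paper passes upstairs: it applies the Hurwitz formula to get $\textstyle\frac{1}{3}|\bar{S}|>\bar{\gg}-1$ for the induced group $\bar{S}=S/N$ on the quotient curve $\bar{\cX}=\cX/N$, deduces (via Proposition \ref{propap=3} applied to $\bar{\cX}$, together with a case analysis on $\bar{\gg}$ and $\bar{\gamma}$, including a separate Deuring--Shafarevich computation to exclude $\bar{\gg}=\bar{\gamma}=1$) that $\bar{S}$ must fix a point of $\bar{\cX}$, and then identifies the fiber over that fixed point with one of the short orbits $\Omega_i$ to conclude $|N|=\textstyle\frac{1}{3}|S|$. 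You instead stay on $\cX$ and apply (\ref{eq2deuring}) directly to $N$: normality of $N$ plus (\ref{eq130apr2013}) gives the clean dichotomy (on each $\Omega_i$, either $N$ is semiregular or $N_P=S_P$ for all $P$), and the resulting orbit count yields $\gamma^*=1-\textstyle\frac{1}{3}[S:N]$ or $\gamma^*=1-[S:N]$, so a proper non-semiregular $N$ has index exactly $3$ and is semiregular on one of the two short orbits. Your bookkeeping is right: short $N$-orbits can only occur inside $\Omega_1\cup\Omega_2$ because points outside have trivial $S$-stabilizer, the orbit counts $[S:N]$ and $2[S:N]$ are correct, and the contributions $\textstyle\frac{2}{3}|S|$ and $\textstyle\frac{4}{3}|S|$ follow. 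Your approach buys several things: it handles $|S|=9$ uniformly (the paper simply declares that case trivial), it avoids the delicate small-genus/small-$3$-rank cases for the quotient curve, and it makes explicit the justification $S_Q\cap N=N_Q=1$ for the complement, which the paper's closing sentence ($\varepsilon\notin N$) leaves implicit. Two small remarks: your degenerate case $N=S$ is indeed consistent with the statement only under the reading that $\varepsilon=1$ is allowed, which matches how the proposition is later applied (e.g., to $Z(S)$ in Proposition \ref{propgp=3}); and your closing claim that $|S|\ge 9$ is needed to pin down $[S:N]=3$ is slightly misplaced --- that step needs only that $N$ is proper and $[S:N]$ is a power of $3$, while $|S|\ge 9$ is what guarantees a proper non-trivial normal subgroup can exist at all.
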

\begin{proof} The assertion trivially holds for $|S|=9$. Assume that some non-trivial element in $N$ fixes point $P$. From the Hurwitz genus formula applied to $N$, we have
$\textstyle\frac{1}{3}|S|>|N|(\bar{\gg}-1)$ where $\bar{\gg}$ is the $3$-rank of the quotient curve $\bar{\cX}=\cX/N$. Let $\bar{S}$ be the automorphism group of $\bar{\cX}$ induced by $S$. Then $|\bar{S}||N|=|S|$ and hence
$\textstyle\frac{1}{3}|\bar{S}|>\bar{\gg}-1.$
Assume that $\bar{\gg}\geq 2$. Then (I) holds for $\bar{S}$. If (II) were also true for $\bar{S}$ then Proposition \ref{propap=3} would imply $|\bar{S}|=3(\gg-1)$. So, $\bar{S}$ fixes a point in $\bar{\cX}$. This remains true when $\bar{\gamma}=0$. Let $\bar{Q}\in\bar{\cX}$ be such a fixed point. Then the orbit $\mathcal{O}$ of $N$ consisting of all points of $\cX$ lying over $\bar{Q}$ is also an orbit of $S$. Since $\Omega_1$ and $\Omega_2$ are the only short orbits of $S$, this yields that either $\mathcal{O}$ coincides with one of them, say $\Omega_1$  Therefore, $|N|=\textstyle\frac{1}{3}|S|$. The stabilizer $\varepsilon$ of a point $R\in\Omega_2$ on $S$ has order $3$ and $\varepsilon\not\in N$. Therefore $S=N\rtimes \langle\varepsilon\rangle$.
We are left with the case $\bar{\gg}=\bar{\gamma}=1$. Let $\mathcal{O}_1,\ldots,\mathcal{O}_m$ be the short orbits of $N$. Since the stabilizer $N_Q$ of any point $Q\in \mathcal{O}_i$ has order $3$,
(\ref{eq2deuring}) applied to $N$ gives
$$\textstyle\frac{1}{3}|S|=|\gamma-1)=\textstyle\frac{2}{3}|N|m$$
whence $|S|=2|N|m$. But this is impossible as $|S|$ is a power of $3$.
\end{proof}
\begin{proposition}
\label{propgp=3} If $|S|>9$ then the center $Z(S)$ of $S$ is semiregular on $\cX$.
\end{proposition}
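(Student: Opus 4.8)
The plan is to apply the dichotomy of Proposition \ref{propcp=3} to the normal subgroup $N=Z(S)$ and then to exclude its second alternative. Since $S$ is a non-trivial $3$-group, its centre $Z(S)$ is non-trivial, and it is of course normal in $S$; moreover the hypothesis $|S|>9$ gives in particular $|S|\geq 9$, so Proposition \ref{propcp=3} is available with $N=Z(S)$. It tells us that either $Z(S)$ is semiregular on $\cX$ -- which is precisely what we want -- or $S=Z(S)\rtimes\langle\varepsilon\rangle$ with $\varepsilon^3=1$. The entire task thus reduces to showing that this second configuration cannot arise.

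To rule it out, I would first read off that in the presumed splitting $S=Z(S)\rtimes\langle\varepsilon\rangle$ the complement $\langle\varepsilon\rangle$ has order $3$ and meets $Z(S)$ trivially, so that $[S:Z(S)]=3$ and the quotient $S/Z(S)$ is cyclic of order $3$. This is impossible by the elementary and standard fact that a group whose central quotient is cyclic is itself abelian: if $S/Z(S)$ is cyclic then $S$ is abelian, forcing $S/Z(S)$ to be trivial and contradicting $[S:Z(S)]=3$. Equivalently -- and this is the formulation that fits best with the machinery already in place -- once $S$ is seen to be abelian, Proposition \ref{propbp=3} yields $|S|\leq 9$, against the hypothesis $|S|>9$. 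Either route shows the second alternative of Proposition \ref{propcp=3} cannot occur, whence $Z(S)$ is semiregular on $\cX$.

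There is essentially no genuine obstacle here; the whole argument is the reduction to the index-$3$ case together with the remark that no non-abelian group has a centre of index $3$. The one point deserving care is the precise strength of the hypothesis: the strict inequality $|S|>9$, rather than merely $|S|\geq 9$, is genuinely needed. Indeed, when $|S|=9$ the group is abelian with $Z(S)=S$, and $S$ is then \emph{not} semiregular, since it possesses the two short orbits $\Omega_1$ and $\Omega_2$ of Proposition \ref{propap=3}; it is exactly this borderline case that the appeal to Proposition \ref{propbp=3} (or to $|S|>9$ directly) serves to discard.
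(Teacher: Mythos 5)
Your proof is correct and follows essentially the same route as the paper: both apply Proposition \ref{propcp=3} to $N=Z(S)$ and kill the alternative $S=Z(S)\rtimes\langle\varepsilon\rangle$ by showing it would force $S$ to be abelian, contradicting Proposition \ref{propbp=3} since $|S|>9$. The only cosmetic difference is that you invoke the fact that $S/Z(S)$ cyclic implies $S$ abelian, whereas the paper observes directly that a semidirect product over the centre is a direct product; these are interchangeable one-line remarks, and your aside about sharpness at $|S|=9$ is accurate.
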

\begin{proof} Since $Z(S)$ is a normal subgroup of $S$, Proposition \ref{propcp=3} applies to $Z(S)$. The case $S=Z(S) \rtimes \langle \varepsilon \rangle$ cannot actually occur since this semidirect product would be direct and $S$ would be abelian contradicting Proposition \ref{propbp=3}.
\end{proof}
\begin{proposition}
\label{a30ag2013} Let $N$ be a normal subgroup of $G$ such that $|N|\le \frac{1}{9}|S|$. Then the quotient curve $\bar{\cX}=\cX/N$ with $\bar{S}=S/N$ and $\gg(\bar{\cX})-1=(\gg-1)/|N|$ satisfies the hypotheses of Theorem \ref{princ} but does not have the property given in {\rm(i)} of Theorem \ref{princ}.
\end{proposition}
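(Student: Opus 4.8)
The plan is to reduce everything to a single structural fact: that $N$ acts on $\cX$ as a semiregular group. If this holds, then the cover $\cX\to\bar{\cX}=\cX/N$ is unramified, and both the Hurwitz genus formula (\ref{eq1}) and the Deuring--Shafarevich formula (\ref{eq2deuring}) collapse to a plain division by $|N|$. Indeed, with $N$ semiregular the Hurwitz formula becomes $2\gg-2=|N|(2\gg(\bar{\cX})-2)$, giving the desired $\gg(\bar{\cX})-1=(\gg-1)/|N|$, while (\ref{eq2deuring}) becomes $\gamma-1=|N|(\bar{\gamma}-1)$ with $\bar{\gamma}=\gamma(\bar{\cX})$. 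Since Proposition \ref{propap=3} gives $\gg-1=\textstyle\frac{1}{3}|S|$ with $\cX$ ordinary, I would then get $\bar{\gamma}-1=(\gamma-1)/|N|=(\gg-1)/|N|=\gg(\bar{\cX})-1$, so $\bar{\cX}$ is again ordinary and in particular $\bar{\gamma}>0$.

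The key step, and the one I expect to be the only genuine obstacle, is establishing the semiregularity of $N$; this is precisely where the hypothesis $|N|\le\textstyle\frac{1}{9}|S|$ enters. If $N$ is non-trivial, then $|N|\ge 3$ forces $|S|\ge 9|N|\ge 27$, so Proposition \ref{propcp=3} applies to $N$ and yields one of two alternatives: either $N$ is semiregular, or $S=N\rtimes\langle\varepsilon\rangle$ with $\varepsilon^3=1$. The latter would mean $[S:N]=3$, i.e. $|N|=\textstyle\frac{1}{3}|S|$, contradicting $|N|\le\textstyle\frac{1}{9}|S|$. Hence the second alternative is excluded and $N$ is semiregular. (If $N$ is trivial the assertion is vacuous, $\bar{\cX}=\cX$.) I stress that the bound $\textstyle\frac{1}{9}$ is doing exactly this work: without $|N|<\textstyle\frac{1}{3}|S|$ the ramified case could occur and the clean genus formula would fail.

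It then remains to verify that $\bar{S}=S/N$ is a genuine $3$-subgroup of $\aut(\bar{\cX})$ meeting the hypotheses of Theorem \ref{princ} but not its conclusion (i). Faithfulness is automatic from Galois theory: since $N\trianglelefteq S$, the group $S$ acts on $\K(\cX)^N=\K(\bar{\cX})$, and the kernel of this action consists of the elements fixing $\K(\bar{\cX})$ pointwise, which is exactly $\gal(\K(\cX)/\K(\cX)^N)=N$; thus $S/N$ embeds into $\aut(\bar{\cX})$ and is a $3$-group. Combining $\gg(\bar{\cX})-1=(\gg-1)/|N|=\textstyle\frac{1}{3}|S|/|N|=\textstyle\frac{1}{3}|\bar{S}|$ with $|\bar{S}|=|S|/|N|\ge 9$ yields $\gg(\bar{\cX})\ge 4\ge 2$ and $|\bar{S}|=3(\gg(\bar{\cX})-1)>2(\gg(\bar{\cX})-1)$, so (\ref{hyp}) holds for $\bar{S}$ on $\bar{\cX}$. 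Finally, property (i) of Theorem \ref{princ} requires $p$-rank zero, whereas I have already shown $\bar{\gamma}=\gg(\bar{\cX})>0$; hence $\bar{\cX}$ cannot have property (i), which completes the argument. Once semiregularity is secured via the dichotomy of Proposition \ref{propcp=3}, all the remaining numerology is forced.
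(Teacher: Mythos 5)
Your proof is correct and follows essentially the same route as the paper: both arguments rest on the dichotomy of Proposition \ref{propcp=3}, excluding the case $S=N\rtimes\langle\varepsilon\rangle$ because it would force $|N|=\frac{1}{3}|S|>\frac{1}{9}|S|$, so that $N$ is semiregular, the cover $\cX\to\cX/N$ is unramified, and the genus and $p$-rank relations follow by dividing by $|N|$. Your write-up is in fact more complete than the paper's two-line proof, since you also verify faithfulness of the $\bar{S}$-action, the inequality (\ref{hyp}) for $\bar{S}$, and the failure of property (i) via $\bar{\gamma}>0$, all of which the paper leaves implicit.
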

\begin{proof} By Proposition \ref{propcp=3}, the Galois $p$-extension of $\bar{\cX}$ with Galois group $\bar{G}$ is unramified. Therefore, the Deuring-Shafarevic formula applied to $\bar{G}$ gives that   $\gg-1=|\bar{G}|(\bar{\gg}-1)$.
\end{proof}
Since the center of any $p$-group is non-trivial, a straightforward inductive argument on $|S|$ depending on Proposition \ref{a30ag2013} gives the following result.
\begin{proposition}
\label{pro12dic} If there exists a curve which satisfies the hypothesis of Theorem \ref{princ} for $|S|=3^k$ but does not have the property {\rm(i)}, then for any $1\le j <k$ there also exists a curve which satisfies the hypothesis of Theorem \ref{princ} for $|S|=3^j$ but does not have the property {\rm(i)}.
\end{proposition}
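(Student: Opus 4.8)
The plan is to induct on the exponent $k$ (equivalently, on $|S|=3^k$), at each stage splitting off a single central subgroup of order $3$ and descending to the quotient curve via Proposition \ref{a30ag2013}.

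First I would fix a curve $\cX$ realizing $|S|=3^k$ with the stated properties, and treat the generic step $k\ge 3$. Since $S$ is a non-trivial $3$-group, its center $Z(S)$ is non-trivial, so I can choose a subgroup $N\le Z(S)$ of order $3$; being central, $N$ is normal in $S$. Because $|N|=3\le 3^{\,k-2}=\tfrac19|S|$, Proposition \ref{a30ag2013} applies and yields $\bar\cX=\cX/N$ with $\bar S=S/N$ of order $3^{k-1}$, again satisfying the hypotheses of Theorem \ref{princ} and again lacking property (i). Since $\bar\cX$ is of the same type as $\cX$, I can reapply the construction, and iterating the descent produces curves realizing every power $3^{k-1},3^{k-2},\dots,3^{2}$; the last admissible step is the passage from $|S|=27$ to $|S|=9$, where $|N|=3=\tfrac19\cdot 27$ still meets the hypothesis of Proposition \ref{a30ag2013}. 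This disposes of all $j$ with $2\le j\le k-1$.

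The boundary value $j=1$, that is $|S|=3$ and genus $2$, I would handle not by descent but by exhibiting a curve directly. By Remark \ref{igusa1bis} the ordinary genus $2$ curve $X(Y^{3}-Y)-X^{2}+c=0$ carries the automorphism $(X,Y)\mapsto(X,Y+1)$ of order $3$; taking $S$ to be the group it generates gives $|S|=3>2=2(\gg-1)$, so the hypothesis \eqref{hyp} holds, while ordinariness ($\gamma=2>0$) forces property (i) to fail. As this curve exists unconditionally, the case $j=1$ is settled, and together with the descent it covers the full range $1\le j<k$.

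The one point needing care is exactly where the descent must terminate. Proposition \ref{a30ag2013} is available only under $|N|\le\tfrac19|S|$, and this inequality is precisely what guarantees, through Proposition \ref{propcp=3}, that $N$ is semiregular; this in turn makes the cover $\cX\to\cX/N$ unramified, so that the Hurwitz and Deuring--Shafarevich formulas transport ordinariness and the relation $\gg-1=\tfrac13|S|$ to the quotient. For $|S|=9$, however, a central subgroup of order $3$ satisfies $|N|=\tfrac13|S|$ and lies in the excluded alternative $S=N\rtimes\langle\varepsilon\rangle$ of Proposition \ref{propcp=3}, where $N$ may fix points; hence the quotient construction cannot descend to $|S|=3$, and this smallest case genuinely requires the explicit genus $2$ example above. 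Beyond this, the argument is the routine verification that the hypotheses of Theorem \ref{princ} propagate along the tower of central quotients, which is exactly the content of Proposition \ref{a30ag2013}.
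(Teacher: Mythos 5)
Your proposal is correct and takes essentially the same route as the paper, whose entire proof of Proposition \ref{pro12dic} is the one-line remark that a non-trivial central subgroup together with Proposition \ref{a30ag2013} yields a straightforward induction on $|S|$. Your explicit treatment of the boundary case $j=1$ (where the hypothesis $|N|\le\frac{1}{9}|S|$ of Proposition \ref{a30ag2013} fails for $|S|=9$) is a legitimate and necessary detail that the paper glosses over, and your patch via the genus $2$ curve of Remark \ref{igusa1bis} agrees with the paper's own handling of the case $|S|=3$ at the start of Section 4.
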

\begin{proposition}
\label{propdp=3} Let $N$ be a non-trivial normal subgroup of $S$. If the factor group $S/N$ is a abelian then either $|N|=\textstyle\frac{1}{3}|S|$ or $|N|=\textstyle\frac{1}{9}|S|$, and in the latter case, $S/N$ is an elementary abelian group.
\end{proposition}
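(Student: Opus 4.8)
The plan is to pass to the quotient curve $\bar\cX=\cX/N$ carrying the induced abelian group $\bar S=S/N$, and then to quote Proposition \ref{propbp=3}. Since the asserted conclusion presupposes $N\neq S$, I treat $N$ as a proper non-trivial normal subgroup; then $[S:N]$ is a power of $3$ with $[S:N]\ge 3$, and the goal is to show $[S:N]\in\{3,9\}$, with $S/N$ elementary abelian whenever $[S:N]=9$.

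First I would set aside the case $[S:N]=3$: this is exactly $|N|=\frac13|S|$, one of the two permitted outcomes, so nothing needs to be verified there. It remains to treat $[S:N]>3$. As $S/N$ is a $3$-group, this forces $[S:N]\ge 9$, equivalently $|N|\le\frac19|S|$, which is precisely the hypothesis of Proposition \ref{a30ag2013}.

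The crux is then the application of Proposition \ref{a30ag2013}: it guarantees that the pair $(\bar\cX,\bar S)=(\cX/N,S/N)$ again satisfies the hypotheses of Theorem \ref{princ} while still failing property (i), with $\gg(\bar\cX)-1=(\gg-1)/|N|$. Consequently every result established so far for a curve under the hypotheses of Theorem \ref{princ} without property (i) is available for $(\bar\cX,\bar S)$. Since $\bar S=S/N$ is abelian by assumption, I would invoke Proposition \ref{propbp=3}, which yields $|\bar S|=3$, or $|\bar S|=9$ with $\bar S$ elementary abelian. Because $|\bar S|=[S:N]\ge 9$, only the second alternative can hold, giving $[S:N]=9$ and $S/N$ elementary abelian, as desired.

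The step I expect to carry all the weight is the transfer of the standing hypotheses from $\cX$ to $\bar\cX$, that is, the invocation of Proposition \ref{a30ag2013}. That reduction is legitimate precisely because $[S:N]\ge 9>3$ excludes the ramified alternative $S=N\rtimes\langle\varepsilon\rangle$ of Proposition \ref{propcp=3} and thereby forces $N$ to be semiregular, so that the Deuring--Shafarevich computation underlying Proposition \ref{a30ag2013} goes through. Once this is in place, no further use of the Hurwitz or Deuring--Shafarevich formulas is needed, and the conclusion drops out of Proposition \ref{propbp=3}.
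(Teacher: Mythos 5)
Your proof is correct and follows essentially the same route as the paper: the paper's one-line proof (``a corollary of Propositions \ref{propcp=3} and \ref{a30ag2013}'') implicitly means exactly your chain --- $[S:N]\ge 9$ rules out the ramified alternative of Proposition \ref{propcp=3}, Proposition \ref{a30ag2013} transfers the standing hypotheses to $(\cX/N,\,S/N)$, and the abelian dichotomy of Proposition \ref{propbp=3} then forces $|S/N|=9$ with $S/N$ elementary abelian. You merely make explicit the appeal to Proposition \ref{propbp=3} (and the harmless exclusion of $N=S$), which the paper leaves tacit.
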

\begin{proof} The assertion is a corollary of Propositions \ref{propcp=3} and \ref{a30ag2013}.
\end{proof}
\begin{proposition}
\label{30ag2013} If $|S|\ge 9$ then the following hold.
\begin{itemize}
\item[\rm(i)] $\Phi(S)=S'$.
\item[\rm(ii)] $|\Phi(S)|=\frac{1}{9}|S|$.
\item[\rm(iii)] $S$ contains exactly four maximal subgroups, each being a normal subgroup of $S$ of index $3$.
\item[\rm(iv)] Exactly two of the four maximal subgroups of $S$ are semiregular on $\cX$.
\item[\rm(v)] $S$ can be generated by two elements.
\end{itemize}
\end{proposition}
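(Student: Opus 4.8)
The plan is to reduce parts (i), (ii), (iii) and (v) to the single structural fact that $S/\Phi(S)\cong C_3\times C_3$ with $\Phi(S)=S'$, and then to settle (iv) by a ramification count. For the structural fact I would first dispose of the abelian case: by Proposition \ref{propbp=3} this forces $|S|=9$ and $S$ elementary abelian, so $\Phi(S)=S'=1$ and the four subgroups of order $3$ are exactly the maximal subgroups. When $S$ is non-abelian (so $|S|>9$) I would apply Proposition \ref{propdp=3} to the normal subgroup $N=S'$: since $S/S'$ is abelian, either $|S'|=\tfrac13|S|$ or $|S'|=\tfrac19|S|$. The first alternative makes $S/S'$ cyclic, and a $3$-group with cyclic abelianization is itself cyclic, contradicting non-abelianness; hence $|S'|=\tfrac19|S|$ and $S/S'$ is elementary abelian. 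Elementary abelianness of $S/S'$ gives $S^3\subseteq S'$, so $\Phi(S)=S'S^3=S'$, which is precisely (i) and (ii). Then $S/\Phi(S)\cong\mathbb{F}_3^2$, and the Burnside basis theorem yields that $S$ is $2$-generated, giving (v), and has exactly $\tfrac{3^2-1}{3-1}=4$ maximal subgroups, each normal of index $3$ since index-$p$ subgroups of a $p$-group are normal, giving (iii).

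For (iv) I would work in the two-dimensional $\mathbb{F}_3$-space $V=S/\Phi(S)$, where maximal subgroups correspond to the four lines and, crucially, conjugation is invisible (conjugate elements differ by a commutator, hence have the same image modulo $\Phi(S)\supseteq S'$). Fixing $P\in\Omega_1$ and $R\in\Omega_2$, the stabilizers $T_1=S_P$ and $T_2=S_R$ have order $3$ by \eqref{eq130apr2013}; since all stabilizers of points in $\Omega_1$ are conjugate they share one image $\bar T_1$ in $V$, and likewise for $\Omega_2$. A maximal subgroup fails to be semiregular exactly when it contains a point-stabilizer, that is, when its line contains $\bar T_1$ or $\bar T_2$. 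I would first check that $\bar T_1,\bar T_2\neq 0$: otherwise a point-stabilizer would lie in $\Phi(S)=S'$, making the index-$9$ normal subgroup $S'$ non-semiregular, which Proposition \ref{propcp=3} forbids. Thus the non-semiregular maximal subgroups are exactly the preimages $M_1,M_2$ of $\bar T_1,\bar T_2$, and the whole statement reduces to showing $\bar T_1\neq\bar T_2$.

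This coincidence is the main obstacle, and I expect to break it with the Deuring--Shafarevich formula \eqref{eq2deuring}. Suppose $\bar T_1=\bar T_2$ and let $M$ be the common maximal subgroup, so $M$ (of order $\tfrac13|S|$) contains every stabilizer of $\Omega_1\cup\Omega_2$. Since the only points with non-trivial $S$-stabilizer lie in $\Omega_1\cup\Omega_2$, the short orbits of $M$ arise solely by splitting $\Omega_1$ and $\Omega_2$; each splits into three $M$-orbits of length $\tfrac19|S|=\tfrac13|M|$ with stabilizer of order $3$, so $M$ has exactly six short orbits. Feeding this into \eqref{eq2deuring} for $M$, together with $\gamma-1=\tfrac13|S|=|M|$ from Proposition \ref{propap=3}, yields $\gamma-1=|M|(\gamma_M-1)+6\cdot\tfrac{2}{3}|M|=|M|(\gamma_M+3)$ for the $3$-rank $\gamma_M$ of $\cX/M$, forcing $\gamma_M=-2$, which is absurd. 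Hence $M_1\neq M_2$, exactly two of the four maximal subgroups are non-semiregular, and the remaining two are semiregular, proving (iv). The only delicate points to get right are the orbit-splitting count and the verification that $M$ acquires no further short orbits, both of which hinge on $|S_Q|=3$ for $Q\in\Omega_1\cup\Omega_2$.
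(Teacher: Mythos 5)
Your proposal is correct. For parts (i), (ii), (iii) and (v) it is essentially the paper's own argument: Proposition \ref{propdp=3} applied to $N=S'$, exclusion of $|S'|=\frac{1}{3}|S|$ because a $3$-group with cyclic abelianization is itself cyclic (the paper quotes Huppert, Hilfssatz 7.1.b, and then contradicts Proposition \ref{propbp=3}), hence $S/S'$ elementary abelian of order $9$, $\Phi(S)=S'$, and then the Frattini correspondence plus the Burnside basis theorem. The genuine divergence is in (iv). The paper stays inside $S$: by normality each $M_i$ contains the stabilizers of all points of $\Omega_i$, the four maximal subgroups partition $S\setminus\Phi(S)$, and the distinctness $M_1\neq M_2$ is dispatched in one sentence by invoking the last claim of Proposition \ref{propap=3} (only the identity fixes $\Omega_1\cup\Omega_2$ pointwise). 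That one-line deduction is immediate when $|S|=9$, where stabilizers of points in one orbit coincide and fix that orbit pointwise, but for $|S|>9$ it tacitly requires a further step, e.g.\ passing to $\cX/\Phi(S)$ via Proposition \ref{a30ag2013} and applying the order-$9$ case there. You replace this by a direct ramification count: assuming $M_1=M_2=M$, the set $\Omega_1\cup\Omega_2$ breaks into six short $M$-orbits of length $\frac{1}{3}|M|$ with stabilizers of order $3$, no other short orbits exist since points outside $\Omega_1\cup\Omega_2$ have trivial $S$-stabilizer, and \eqref{eq2deuring} applied to $M$ together with $\gamma-1=\frac{1}{3}|S|=|M|$ from Proposition \ref{propap=3} gives $\gamma_M=-2$, which is absurd. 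This is a sound alternative, uniform in $|S|\ge 9$; moreover your explicit verification that point stabilizers are not contained in $\Phi(S)$ (Proposition \ref{propcp=3} forces any non-semiregular normal subgroup to have index $3$, whereas $\Phi(S)$ has index $9$) makes precise a step the paper uses only implicitly when asserting that each stabilizer lies in exactly one maximal subgroup. What the paper's route buys, once the missing reduction is supplied, is brevity, since it recycles an already-proved fixed-point statement instead of running a fresh Deuring--Shafarevich computation; what yours buys is a self-contained and verifiable argument at the one place where the paper is terse.
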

\begin{proof} From Proposition \ref{propdp=3}, either $|S'|=\frac{1}{3}|S|$, or  $|S'|=\frac{1}{9}|S|$. In the former case, $S$ is cyclic by \cite[Hilfssatz 7.1.b]{huppertI1967}  but this contradicts Proposition \ref{propbp=3}. Therefore, $|S'|=\frac{1}{9}|S|$.  Since $S/S'$ is elementary abelian by Proposition \ref{propdp=3}, (i) holds. From this (ii) follows.
Let $\varphi$ be the natural homomorphism $S\mapsto S/\Phi(S)$. Since every maximal subgroup of $S$ contains $\Phi(S)$, there is a one-to-one correspondence between the maximal subgroups of $S$ and the subgroups of $S/\Phi(S)$. By (ii), $S/\Phi(S)$ is an elementary abelian group of order $9$ with exactly four proper subgroups. Therefore there are exactly four maximal subgroups in $S$. Also, the subgroups of $S/\Phi(S)$ are normal, and hence each of the four maximal subgroups of $S$ is normal, as well. Furthermore, the four maximal subgroups of $S/\Phi(S)$ partition the set of non-trivial elements of $S/\Phi(S)$. Hence every element of $S\setminus\Phi(S)$ belongs to exactly one of the four maximal subgroups of $S$. Take a point $P\in \Omega_1$, and let $M_1$ be the maximal subgroup of $S$ containing $S_P$. Since $M$ is a normal subgroup of $S$ and $\Omega_1$ is an $S$-orbit, this yields that $M$ contains $S_Q$ for every $Q\in \Omega_1$.  Repeating the above argument for a point in $\Omega_2$ shows that a maximal normal subgroup contains the stabilizer of each point in $\Omega_2$. From the last claim of Proposition \ref{propap=3}, these two maximal subgroups are distinct. Therefore, the remaining two maximal subgroups are both semiregular on $\cX$.

Finally, (i) together with the Burnside fundamental theorem, \cite[Chapter III, Satz 3.15]{huppertI1967} imply that $S$ can be generated by two elements.
\end{proof}
{}From now on, the following notation is used: For $i=1,2,$ $M_i$ denotes the maximal normal subgroup of $S$ containing the stabilizer of a point of $\Omega_i$ while $M_3$ and $M_4$ stand for the semiregular maximal subgroups of $S$, respectively. Proposition \ref{propcp=3} has the following corollary.
\begin{proposition}
\label{31ago2013} Neither $M_1$ nor $M_2$ is cyclic.
\end{proposition}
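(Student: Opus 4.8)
The plan is to argue by contradiction and to treat $M_1$ and $M_2$ symmetrically, since interchanging the roles of $\Omega_1$ and $\Omega_2$ exchanges $M_1$ and $M_2$. As the reference to Proposition~\ref{propcp=3} suggests, the strategy is to produce, from a hypothetical cyclic $M_1$, a single normal subgroup of order $3$ that is not semiregular, and then to let Proposition~\ref{propcp=3} force $|S|=9$. (The statement is meaningful only for $|S|>9$: when $|S|=9$ the group $S$ is elementary abelian of order $9$ by Proposition~\ref{propbp=3}, and each $M_i$ has order $3$, hence is trivially cyclic.)

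So I would assume $|S|>9$ and suppose, for a contradiction, that $M_1$ is cyclic. A cyclic group has a unique subgroup of each order, so $M_1$ contains a unique subgroup $C$ with $|C|=3$. By Proposition~\ref{30ag2013}(iii) the subgroup $M_1$ is normal in $S$, and by its defining property (as established in the proof of Proposition~\ref{30ag2013}) it contains the stabilizer $S_Q$ of every point $Q\in\Omega_1$; by \eqref{eq130apr2013} each such stabilizer has order $3$. The cyclicity of $M_1$ then collapses all these stabilizers onto $C$, that is, $S_Q=C$ for every $Q\in\Omega_1$. Consequently the single group $C$ fixes $\Omega_1$ pointwise, so in particular $C$ is not semiregular on $\cX$.

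Next I would observe that $C$, being the unique subgroup of order $3$ of $M_1$, is characteristic in $M_1$; since $M_1$ is normal in $S$, it follows that $C$ is normal in $S$. Now Proposition~\ref{propcp=3} applies to the non-trivial normal subgroup $C$. As $C$ is not semiregular, the only remaining alternative is $S=C\rtimes\langle\varepsilon\rangle$ with $\varepsilon^3=1$, whence $|S|=|C|\cdot 3=9$, contradicting $|S|>9$. This shows $M_1$ is not cyclic, and the symmetric argument with $\Omega_2$ in place of $\Omega_1$ gives the same conclusion for $M_2$.

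The step I expect to be the crux is the collapse of all the point-stabilizers of $\Omega_1$ onto the single order-$3$ subgroup $C$: this is precisely where cyclicity of $M_1$ is used, and it is what makes $C$ fail to be semiregular and thereby lets Proposition~\ref{propcp=3} bite. The only delicate point beyond this is the bookkeeping about $|S|$, namely recognising that the genuinely exceptional configuration $|S|=9$ (where $M_1$ has order $3$ and is trivially cyclic) must be excluded, so that the conclusion is asserted for $|S|>9$.
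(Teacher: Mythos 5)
Your proof is correct and is precisely the argument the paper leaves implicit when it states Proposition~\ref{31ago2013} as a corollary of Proposition~\ref{propcp=3}: the unique order-$3$ subgroup $C$ of a cyclic $M_i$ is characteristic in $M_i$, hence normal in $S$, and coincides with every point stabilizer on $\Omega_i$ by \eqref{eq130apr2013}, so $C$ is not semiregular and Proposition~\ref{propcp=3} forces $|S|=9$. Your observation that the statement genuinely fails for $|S|=9$ (where $S$ is elementary abelian by Proposition~\ref{propbp=3} and $M_1$, $M_2$ are the order-$3$ stabilizers themselves) is a precision the paper glosses over, and your restriction to $|S|>9$ is the right reading, since the proposition is only applied later for $|S|\ge 27$.
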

\begin{proposition}
\label{b30ago2013} Every normal subgroup of $S$ whose order is at most $\frac{1}{9}|S|$ is contained in $\Phi(S)$.
\end{proposition}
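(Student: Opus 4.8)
The plan is to argue by contradiction and to transfer the question to the quotient curve $\bar\cX=\cX/N$, exploiting the lattice isomorphism between the subgroups of $S/N$ and the subgroups of $S$ containing $N$. Suppose that $N$ is a normal subgroup of $S$ with $|N|\le\frac19|S|$ but $N\not\subseteq\Phi(S)$. Since the trivial subgroup is contained in $\Phi(S)$, we may assume $N\neq\{1\}$, so that $|N|\ge 3$ and hence $|S/N|=|S|/|N|\ge 9$.

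First I would locate $N$ among the maximal subgroups of $S$. The Frattini subgroup $\Phi(S)$ lies in every maximal subgroup, and by its non-generator property $N\Phi(S)=S$ would force $N=S$, which is impossible since $|N|<|S|$; hence $N\Phi(S)$ is a proper subgroup of $S$. By Proposition \ref{30ag2013}(ii) the quotient $S/\Phi(S)$ is elementary abelian of order $9$, so the nontrivial proper subgroup $N\Phi(S)/\Phi(S)$ has order $3$; thus $M:=N\Phi(S)$ has index $3$ in $S$ and is a maximal subgroup. Moreover $M$ is the \emph{only} maximal subgroup of $S$ containing $N$: any maximal subgroup $M'$ with $N\subseteq M'$ also contains $\Phi(S)$, whence $M=N\Phi(S)\subseteq M'$, and maximality forces $M'=M$.

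Next I would pass to the quotient. By Proposition \ref{a30ag2013} the curve $\bar\cX=\cX/N$ together with $\bar S=S/N$ satisfies the hypotheses of Theorem \ref{princ} and does not have property {\rm(i)}; since $|\bar S|\ge 9$, Proposition \ref{30ag2013} applies to $\bar S$ and shows that $\bar S$ has exactly four maximal subgroups. On the other hand, by the correspondence theorem the maximal subgroups of $\bar S=S/N$ are precisely the images $M'/N$ of the maximal subgroups $M'$ of $S$ that contain $N$, and there is only one such subgroup, namely $M$. Hence $\bar S$ has exactly one maximal subgroup, contradicting the count of four. This contradiction yields $N\subseteq\Phi(S)$.

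The step requiring the most care is the invocation of Proposition \ref{a30ag2013}, which is what guarantees that $\bar\cX$ and $\bar S$ again fall under the standing hypotheses of Section \ref{sec2}, so that the structural conclusion of Proposition \ref{30ag2013} is available for $\bar S$; once this is in place the collision between ``four maximal subgroups'' and the ``unique maximal subgroup'' forced by $N\not\subseteq\Phi(S)$ is immediate. If one prefers a purely group-theoretic endgame, one may instead note that a $p$-group with a unique maximal subgroup is cyclic, while Proposition \ref{propbp=3} forbids $\bar S$ from being cyclic of order at least $9$; either route closes the argument.
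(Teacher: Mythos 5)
Your proof is correct, and it hinges on the same pivot as the paper's: passing to the quotient curve $\bar{\cX}=\cX/N$ and invoking Proposition \ref{a30ag2013} so that the structural conclusions of Proposition \ref{30ag2013} become available for $\bar{S}=S/N$ (your checks that $|\bar{S}|\ge 9$ and that trivial $N$ is dispatched separately are exactly the right points of care). The finishing move, however, is genuinely different. The paper argues directly, with no contradiction: by Huppert's Hilfssatz (Chapter III, 3.4.a), $\Phi(S)N/N$ is a subgroup of $\Phi(S/N)$; since part (ii) of Proposition \ref{30ag2013} gives $|\Phi(S)|=\frac{1}{9}|S|$ and, applied to $\bar{\cX}$, $|\Phi(S/N)|=\frac{1}{9}|S|/|N|$, the isomorphism $\Phi(S)N/N\cong\Phi(S)/(\Phi(S)\cap N)$ yields $|N|\le|\Phi(S)\cap N|$, i.e.\ $N\subseteq\Phi(S)$, in two lines. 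You instead use part (iii) of Proposition \ref{30ag2013} (the count of exactly four maximal subgroups) together with the lattice correspondence: if $N\not\subseteq\Phi(S)$, then $M=N\Phi(S)$ is the unique maximal subgroup of $S$ over $N$, so $S/N$ would have a unique maximal subgroup, clashing with the count of four (or, as you note, forcing $\bar{S}$ cyclic against Proposition \ref{propbp=3}). The two finishes encode the same fact about the Frattini quotient --- $|S/\Phi(S)|=9$ and ``four maximal subgroups'' are equivalent --- so nothing is lost either way; the paper's version is shorter and avoids the case split on whether $N$ is trivial, while yours trades the citation of Huppert's Hilfssatz for the elementary correspondence theorem and makes the obstruction (a unique maximal subgroup over $N$) completely transparent. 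One cosmetic remark: part (ii) of Proposition \ref{30ag2013} literally gives only $|S/\Phi(S)|=9$; that this quotient is \emph{elementary} abelian is the Burnside basis theorem for $p$-groups (or follows from part (i) together with Proposition \ref{propdp=3}), which is how the paper's own proof of that proposition obtains it --- worth a word in your write-up, though it does not affect correctness since a group of order $9$ has a nontrivial proper subgroup only of order $3$ in any case.
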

\begin{proof} Let $N$ be a normal subgroup of $S$. {}From \cite[Chapter III, Hilfssatz 3.4.a]{huppertI1967}, $\Phi(S)N/N$ is a subgroup of $\Phi(S/N)$. From Propositions \ref{a30ag2013} and Proposition \ref{30ag2013} applied to $\bar{\cX}=\cX/N$, we have $|\Phi(S/N)|=\frac{1}{9}|S|/|N|$. Since $\Phi(S)/(\Phi(S)\cap N) \cong \Phi(S)N/N$, this yields $|N|\le |\Phi(S)\cap N|$. Therefore, if $|N|\leq |\Phi(S)|$ then $N$ is contained in $\Phi(S)$.
\end{proof}
\begin{proposition}
\label{a4set2013} If $M_i$ is abelian for some $1\le i\le 4$, then $S$ has maximal class.
\end{proposition}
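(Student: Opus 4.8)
The plan is to use the abelian maximal subgroup first to pin down the order of the center of $S$, and then to recover the entire lower central series from a single commutator endomorphism. We may assume $|S|\ge 27$, so that $S$ is non-abelian by Proposition~\ref{propbp=3} (the case $|S|=9$ is trivial, since any group of order $3^2$ has maximal class). Write $|S|=3^h$, set $A:=M_i$ for the abelian maximal subgroup, and fix $g\in S\setminus A$. As $[S:A]=3$ is prime and $S$ is non-abelian, $C_S(A)\ne S$, whence $C_S(A)=A$ and in particular $Z(S)\subseteq A$.

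The first step is to show $|Z(S)|=3$. Because $A$ is abelian and normal, the commutator map $\phi\colon A\to A$, $\phi(a)=[a,g]$, is a homomorphism whose image is $[A,g]=S'$ and whose kernel is $C_A(g)=Z(S)$; hence $|S'|=|A|/|Z(S)|$. By Proposition~\ref{30ag2013} we already know $S'=\Phi(S)$ with $|\Phi(S)|=\tfrac19|S|$, while $|A|=\tfrac13|S|$, and comparing the two expressions gives $|Z(S)|=3$.

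The main step is to identify the lower central series $\gamma_1(S)=S\supseteq\gamma_2(S)=S'\supseteq\cdots$ with the iterates of $\phi$. Since $S'\subseteq A$ and $A$ is abelian, $[\gamma_i(S),A]=1$ for every $i\ge 2$, so that $\gamma_{i+1}(S)=[\gamma_i(S),g]$; an easy induction then yields $\gamma_{i+1}(S)=\phi^{\,i}(A)$ for all $i\ge 1$. Now $\phi$ is an endomorphism of the finite abelian group $A$ with $|\ker\phi|=|Z(S)|=3$, and for each $k$ the map induced by $\phi$ embeds $\ker\phi^{k+1}/\ker\phi^{k}$ into $\ker\phi^{k}/\ker\phi^{k-1}$; hence the orders $|\ker\phi^{k+1}/\ker\phi^{k}|$ are non-increasing in $k$ and all at most $3$. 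Equivalently, each factor $\gamma_i(S)/\gamma_{i+1}(S)=\phi^{\,i-1}(A)/\phi^{\,i}(A)$ has order at most $3$. Since $|S/\gamma_2(S)|=|S/S'|=9$, the factorisation $3^{h}=9\cdot\prod_{i\ge 2}|\gamma_i(S)/\gamma_{i+1}(S)|$ forces every factor to have order exactly $3$ and the nilpotency class of $S$ to equal $h-1$; that is, $S$ has maximal class.

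The delicate point is the identification $\gamma_{i+1}(S)=\phi^{\,i}(A)$: one must check carefully that $[A,g]$ is genuinely all of $S'$ and that $[\gamma_i(S),S]=[\gamma_i(S),g]$, both of which rely on $\gamma_i(S)\subseteq A$ together with the vanishing of the ``$A$-part'' of the relevant commutators. Once this reduction is in place, the non-increasing behaviour of the kernel quotients of $\phi$ delivers the conclusion with no further computation.
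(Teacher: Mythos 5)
Your proof is correct, and it takes a genuinely more self-contained route than the paper, whose entire proof of this proposition is a one-line citation: by (i) of Proposition~\ref{30ag2013} the claim ``follows from an elementary result, see \cite[Theorem 5.2]{xu2008}.'' What you have done, in effect, is reprove that black-boxed group-theoretic lemma. Your mechanism is the classical one: for a non-abelian $p$-group with abelian maximal subgroup $A=C_S(A)$ and $g\in S\setminus A$, the commutator map $\phi(a)=[a,g]$ is an endomorphism of $A$ with $\ker\phi=C_A(g)=Z(S)$ and image $S'$, which is the standard identity $|S|=p\,|Z(S)|\,|S'|$ for groups with an abelian subgroup of index $p$; feeding in $|S'|=|\Phi(S)|=\frac{1}{9}|S|$ (note you use part (ii) of Proposition~\ref{30ag2013}, not only (i), which is harmless as both are proved together) gives $|Z(S)|=3$, and then $\gamma_{i+1}(S)=\phi^{\,i}(A)$ plus the kernel filtration forces every lower central factor below $S/S'$ to have order exactly $3$, i.e.\ class $h-1$. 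The points you flag as delicate do go through: $S'=\phi(A)$ because $\phi(A)\le A$ is normal in $S$ (from $[a,g]^g=[a^g,g]$ and $S'\le A$) and $S/\phi(A)$ is generated by the commuting images of $A$ and $g$; and $[\gamma_i(S),g^j]\subseteq\phi(\gamma_i(S))$ follows from $[x,g^2]=[x,g][x^g,g]$ together with normality of $\gamma_i(S)$. Two minor remarks: the bound $|\gamma_i(S)/\gamma_{i+1}(S)|\le 3$ is immediate without the monotone kernel chain, since the kernel of $\phi$ restricted to $\phi^{\,i-1}(A)$ is $\ker\phi\cap\phi^{\,i-1}(A)$, of order at most $|\ker\phi|=3$; and your dismissal of $|S|=9$ rests on the convention that groups of order $p^2$ (class $1=h-1$) count as maximal class, which is consistent with the paper since the proposition is only applied for $|S|\ge 81$. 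As for what each approach buys: the paper's citation is shorter and defers to the literature, while your argument makes the proposition independent of \cite{xu2008} and extracts the explicit intermediate fact $|Z(S)|=3$, which the paper obtains only implicitly through the maximal-class classification used later.
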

\begin{proof} By (i) of Proposition \ref{30ag2013}, the assertion follows from an elementary result, see for instance \cite[Theorem 5.2]{xu2008}.
\end{proof}
It should be noted that $p$-groups of maximal class have intensively investigated, see \cite{huppertI1967} and \cite{berkovich2002}.
\begin{proposition}
\label{prop6febb2013} If $|S|>3$ then $\cX$ is not hyperelliptic.
\end{proposition}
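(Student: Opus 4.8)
The plan is to argue by contradiction: suppose $\cX$ is hyperelliptic while $|S|>3$, so that $|S|=3^h\ge 9$. I would first recall the structure already established: by Proposition \ref{propap=3} the group $S$ has exactly two short orbits $\Omega_1,\Omega_2$, each of size $\tfrac13|S|\ge 3$, and by \eqref{eq130apr2013} every point of $\Omega_1\cup\Omega_2$ has stabilizer of order exactly $3$. Since $\gg(\cX)\ge 2$, the hyperelliptic involution $\iota$ is uniquely determined and lies in the center of $\aut(\cX)$, and the associated degree-two cover $\pi\colon\cX\to\cX/\langle\iota\rangle$ has a genus-zero quotient, so $\cX/\langle\iota\rangle\cong\mathbb P^1$.

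Next I would push the action of $S$ down to $\mathbb P^1$. Because $\iota$ is central, $S$ commutes with $\iota$, hence normalizes $\langle\iota\rangle$ and acts on the quotient $\mathbb P^1$, giving a homomorphism $S\to\aut(\mathbb P^1)=\PGL(2,\K)$. Its kernel consists of the elements of $S$ fixing $\K(\cX)^{\langle\iota\rangle}$ pointwise, that is $S\cap\langle\iota\rangle$; since $|\langle\iota\rangle|=2$ is coprime to $|S|$, the kernel is trivial and $S$ embeds into $\PGL(2,\K)$.

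The key structural input is that a finite $3$-subgroup of $\PGL(2,\K)$ in characteristic $3$ is elementary abelian and, after conjugation, consists of the translations $x\mapsto x+b$ for $b$ ranging over an $\mathbb F_3$-subspace of $\K$. In particular every nontrivial element of $S$ is unipotent of order $3$ and fixes the single point $\infty\in\mathbb P^1$, and $\infty$ is the only point of $\mathbb P^1$ fixed by any nontrivial element of $S$. (Alternatively, once $S$ is seen to be elementary abelian, Proposition \ref{propbp=3} already forces $|S|=9$; but it is the uniform fixed-point description that drives the contradiction, independently of $h$.)

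Finally I would extract the contradiction from the short orbit $\Omega_1$. For $P\in\Omega_1$ the stabilizer $S_P$ has order $3$, and since $S_P$ fixes $P$ it fixes the image $\pi(P)\in\mathbb P^1$; as $\infty$ is the only point fixed by a nontrivial element of $S$, we get $\pi(P)=\infty$ for every $P\in\Omega_1$. Thus all $\tfrac13|S|\ge 3$ points of $\Omega_1$ lie in the fibre $\pi^{-1}(\infty)$, which has at most $\deg\pi=2$ points, a contradiction. The main obstacle is the middle step, namely correctly establishing the faithful descent of $S$ to $\PGL(2,\K)$ and pinning down the fixed-point geometry of $3$-subgroups of $\PGL(2,\K)$ in characteristic $3$; once the short-orbit points are forced into the two-point fibre over $\infty$, the rest is bookkeeping. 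It is worth noting that the bound is sharp: for $|S|=3$ the orbit $\Omega_1$ is a single point and no contradiction arises, in agreement with the hyperelliptic genus-$2$ example appearing in Theorem \ref{princ}.
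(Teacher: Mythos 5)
Your proof is correct, but it takes a genuinely different route from the paper's. The paper disposes of the hyperelliptic case by pure counting: since $S$ fixes no point of $\cX$ (Proposition \ref{prop19feb1}(II)) and $|S|$ is a power of $3$, every $S$-orbit has length divisible by $3$; the set of Weierstrass points is $S$-invariant and has exactly $2\gg+2$ elements in odd characteristic, so $3\mid 2\gg+2$, while Proposition \ref{propap=3} gives $2\gg-2=\frac{2}{3}|S|$, which is likewise divisible by $3$ precisely when $|S|>3$; subtracting the two divisibilities yields $3\mid 4$, a contradiction --- the hyperelliptic involution and its quotient map never appear. You instead argue structurally: the central hyperelliptic involution $\iota$ gives a faithful action of $S$ on $\mathbb{P}^1$ (the kernel is $S\cap\langle\iota\rangle$, trivial by coprimality of $2$ and $|S|$), the classification of finite $3$-subgroups of $\PGL(2,\mathbb{K})$ in characteristic $3$ as groups of translations shows every nontrivial element is unipotent of order $3$ with the single common fixed point $\infty$, and then the nontrivial stabilizers $S_P$, $P\in\Omega_1$ (see \eqref{eq130apr2013}), together with equivariance of $\pi$, force all $\frac{1}{3}|S|\ge 3$ points of $\Omega_1$ into the fibre $\pi^{-1}(\infty)$, which has at most $\deg\pi=2$ points. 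Each of your steps checks out: uniqueness and centrality of $\iota$ for $\gg\ge 2$, triviality of the kernel, the exponent-$3$ translation structure in $\PGL(2,\mathbb{K})$, and the fibre count. As to what each approach buys: the paper's proof is two lines and needs nothing beyond the number $2\gg+2$ of Weierstrass points; yours is longer but more geometric --- it pinpoints where the short orbits would have to sit, shows in passing that $S$ would be elementary abelian (tying in with Proposition \ref{propbp=3}, though as you correctly observe that proposition alone would not eliminate $|S|=9$, so your fibre argument is genuinely needed there), and makes transparent why $|S|=3$ escapes, matching the hyperelliptic genus-$2$ curve in Theorem \ref{princ}(ii), where the paper's proof sees the same sharpness only through the failure of $3\mid\frac{2}{3}|S|$.
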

\begin{proof} Since the length of any $S$-orbit in $\cX$ is divisible by $3$, the number of distinct Weierstrass points of $\cX$ is also divisible by $3$. On the other hand, a hyperelliptic curve of genus $g$ defined over a field of zero or odd characteristic has as many as $2g+2$ Weierstrass points, see \cite[Theorem 7.103]{hirschfeld-korchmaros-torres2008}. Therefore, if $\cX$ were hyperelliptic, both numbers $2g+2$ and $2g-2=\frac{2}{3}|S|$ would be divisible by $3$, a contradiction.
\end{proof}
\begin{proposition}
\label{a31ago2013}
If $|S|=27$ then $S\cong UT(3,3)$.
\end{proposition}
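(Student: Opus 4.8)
The plan is to combine the group-theoretic dichotomy of Proposition \ref{class27} with the structural information about the maximal subgroups of $S$ already collected in Propositions \ref{30ag2013} and \ref{31ago2013}. First I would note that, since $|S|=27>9$, Proposition \ref{propbp=3} forces $S$ to be non-abelian; hence by Proposition \ref{class27} it suffices to show that $S\not\cong C_9\rtimes C_3$, which leaves $S\cong UT(3,3)$ as the only remaining possibility.

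The idea is to count the elements of order dividing $3$. By Proposition \ref{30ag2013} the group $S$ has exactly four maximal subgroups, all of order $9$ and normal, and two of them are $M_1$ and $M_2$, the maximal subgroups containing the point-stabilizers of the two short orbits $\Omega_1,\Omega_2$. By Proposition \ref{31ago2013} neither $M_1$ nor $M_2$ is cyclic, so each is elementary abelian, $M_1\cong M_2\cong C_3\times C_3$; in particular every non-identity element of $M_1\cup M_2$ has order $3$. Moreover $M_1\neq M_2$ (they are attached to distinct short orbits, cf. the proof of Proposition \ref{30ag2013}), and since both are normal of index $3$ their product is all of $S$, whence $|M_1\cap M_2|=9\cdot 9/27=3$. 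Therefore $|M_1\cup M_2|=9+9-3=15$, so $S$ contains at least $15$ elements of order dividing $3$.

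It then remains to contradict this in $C_9\rtimes C_3$. This group has exponent $9$, and a direct inspection shows that its elements of order dividing $3$ number exactly $9$ and form its unique elementary abelian maximal subgroup, the remaining three maximal subgroups being cyclic of order $9$. Since $9<15$, the isomorphism $S\cong C_9\rtimes C_3$ is impossible, and $S\cong UT(3,3)$ follows.

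I expect the only genuine point requiring care to be the last one: the claim that $C_9\rtimes C_3$ carries exactly $9$ elements of order at most $3$ (equivalently, a unique non-cyclic maximal subgroup). This amounts to the routine computation of the cubes $(a^ib^j)^3$ in the presentation $\langle a,b\mid a^9=b^3=1,\ b^{-1}ab=a^4\rangle$, and it is the step I would write out explicitly; everything else is an immediate consequence of the propositions already established.
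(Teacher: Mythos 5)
Your proposal is correct and takes essentially the same route as the paper: the paper also reduces to the dichotomy of Proposition \ref{class27} (non-abelianity coming from Proposition \ref{propbp=3}) and excludes $C_9\rtimes C_3$ on the grounds that it has only one non-cyclic maximal subgroup, while $M_1\neq M_2$ are both non-cyclic by Proposition \ref{31ago2013}. Your count of elements of order dividing $3$ (fifteen in $M_1\cup M_2$ versus nine in $C_9\rtimes C_3$, verified by the cube computation $(a^ib^j)^3=a^{3i}$) is just an equivalent, slightly more laborious packaging of that same observation.
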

\begin{proof} If $|S|=27$ then either $S=C_9\rtimes C_3$, or $S$ is isomorphic to the group $UT(3,3)$ of all upper-triangular unipotent $3\times 3$ matrices over the field with three elements. Since the group $C_9\rtimes C_3$ has only one non-cyclic maximal subgroup, the assertion follows from Proposition \ref{31ago2013}.
\end{proof}
\begin{proposition}
\label{d4set2013} If $|S|\ge 81$ then $S$ is not metacyclic.
\end{proposition}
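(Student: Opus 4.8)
The plan is to argue by contradiction: I assume that $S$ is metacyclic with $|S|=3^h$ and $h\ge 4$, and derive a contradiction with the equality $|S'|=|\Phi(S)|=\frac{1}{9}|S|$ recorded in Proposition \ref{30ag2013}. The guiding observation is that metacyclicity, once combined with the rank-$2$ elementary abelian structure of the Frattini quotient $S/\Phi(S)$, is far more restrictive than it first looks: it forces $S$ to possess a \emph{cyclic} maximal subgroup, and a non-abelian $3$-group with a cyclic subgroup of index $3$ has derived subgroup of order only $3$.

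First I would fix a cyclic normal subgroup $N=\langle a\rangle$ of $S$ with $S/N$ cyclic, as supplied by the definition of a metacyclic group. Since $S/N$ is abelian one has $S'\le N$, and by part (i) of Proposition \ref{30ag2013} this reads $\Phi(S)=S'\le N$. Hence $S/N$ is a cyclic quotient of $S/\Phi(S)$, which by part (ii) of Proposition \ref{30ag2013} has order $9$ and, being the Frattini quotient of a $3$-group, is elementary abelian, so $S/\Phi(S)\cong C_3\times C_3$. The only cyclic quotients of $C_3\times C_3$ have order $1$ or $3$. The value $1$ is impossible, for it would give $S=N$ cyclic, whereas $S$ is non-abelian: by Proposition \ref{propbp=3} every abelian $S$ has order at most $9$, while here $|S|\ge 81$. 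Therefore $[S:N]=3$ and $N=\langle a\rangle$ is a cyclic maximal subgroup of $S$ of order $3^{h-1}$.

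Next I would exploit this cyclic subgroup of index $3$. Choosing $b\in S\setminus N$, conjugation by $b$ induces an automorphism $\phi$ of $N=\langle a\rangle$ with $\phi^3=\mathrm{id}$, since $b^3\in N$ and $N$ is abelian. As $S$ is non-abelian, $\phi$ is non-trivial; because $\aut(C_{3^{h-1}})$ has cyclic Sylow $3$-subgroup, its elements of order dividing $3$ are exactly the maps $a\mapsto a^{1+j3^{h-2}}$ with $j\in\{0,1,2\}$, and non-triviality forces $j\in\{1,2\}$. A direct computation then gives $[a,b]=a^{j3^{h-2}}$, an element of order $3$, and a second short computation (valid for $h\ge 3$) shows that $\langle[a,b]\rangle$ is normalised by $b$; being contained in the abelian group $N$ it is also normalised by $a$, hence it is a normal subgroup of order $3$ with abelian quotient, so $S'=\langle[a,b]\rangle$ and $|S'|=3$. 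This is just the classical picture of a non-abelian $p$-group with a cyclic maximal subgroup as the modular group, and one may instead simply cite \cite{huppertI1967} for $|S'|=3$.

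Finally, comparing $|S'|=3$ with $|S'|=|\Phi(S)|=\frac{1}{9}|S|=3^{h-2}$ forces $h=3$, contradicting $h\ge 4$. The one delicate point, and the step I expect to carry the weight of the argument, is the passage showing that metacyclicity forces a cyclic maximal subgroup; everything downstream is a routine computation inside $\aut$ of a cyclic group. It is worth noting that the borderline case $h=3$ is genuinely realised by the modular group of order $27$, which explains both why the hypothesis $|S|\ge 81$ is necessary and why the conclusion is consistent with Proposition \ref{a31ago2013}.
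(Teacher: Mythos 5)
Your proof is correct, and it takes a genuinely different route from the paper's. The paper argues geometrically: it first produces a normal subgroup $N$ of $S$ of index $27$ (inside $M_3$ when $M_3$ is cyclic, or $N=\Phi(M_3)$ when $M_3$ is two-generated), passes to the quotient curve $\cX/N$ via Proposition \ref{a30ag2013}, invokes Proposition \ref{a31ago2013} to identify $S/N\cong UT(3,3)$, and concludes because every factor group of a metacyclic group is metacyclic while $UT(3,3)$ is not. You instead stay entirely inside abstract group theory, using only two facts the paper has already recorded: $\Phi(S)=S'$ with $|\Phi(S)|=\frac{1}{9}|S|$ (Proposition \ref{30ag2013}) and the fact that $S$ is non-abelian for $|S|\ge 81$ (Proposition \ref{propbp=3}). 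Your key move is sound: from $S'\le N$ and $\Phi(S)=S'$, the quotient $S/N$ is a cyclic quotient of $S/\Phi(S)\cong C_3\times C_3$, forcing $[S:N]=3$, so $N$ is a cyclic maximal subgroup; the computation identifying the order-$3$ automorphisms of $C_{3^{h-1}}$ as $a\mapsto a^{1+j3^{h-2}}$ and deducing $|S'|=3$ (the modular-group picture, citable from \cite{huppertI1967}) is valid for $h\ge 3$, and the contradiction $3=|S'|=|\Phi(S)|=3^{h-2}$ against $h\ge 4$ is clean. What each approach buys: yours is more elementary and self-contained, needing no quotient-curve machinery beyond what Propositions \ref{propbp=3} and \ref{30ag2013} already encode, whereas the paper's reduction showcases the recursive technique (Propositions \ref{a30ag2013} and \ref{pro12dic}) used throughout and yields the sharper structural fact that $S/N\cong UT(3,3)$. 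One small caveat on your closing remark: the modular group $C_9\rtimes C_3$ shows only that your purely group-theoretic argument cannot reach $h=3$; in the curve setting the conclusion of the proposition does in fact extend to $|S|=27$, since Proposition \ref{a31ago2013} pins $S$ down as $UT(3,3)$, which is not metacyclic --- so the hypothesis $|S|\ge 81$ is necessary for your method, not for the geometric statement.
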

\begin{proof} We show that $S$ has a normal subgroup of index $27$. This is certainly true when $M_3$ is cyclic. Otherwise, $M_3$ can be generated by two elements, and its Frattini subgroup $\Phi(M_3)$ has index $27$ in $S$. Since $\Phi(M_3)$ is a characteristic subgroup of $M_3$ and $M_3$ is a normal subgroup of $S$, $\Phi(M_3)$ has the required property. From  Proposition \ref{a30ag2013} applied to $N=\Phi(M_3)$ shows that $\bar{S}=S/\Phi(M_3)$ is a subgroup of $\aut(\bar{\cX})$ with $\cX$ the quotient curve of $\bar{\cX}=\cX/\Phi(M_3)$. Since $\Phi(M_3)$ is semiregular on $\cX$ by Proposition \ref{propcp=3}, $\bar{\cX}$ has genus $10$. As $|\bar{S}|=27$, Proposition \ref{a31ago2013} implies that $\bar{S}\cong UT(3,3)$. In particular, $\bar{S}$ is not metacyclic. Since every factor group of a metacylic group is still metacyclic, the assertion follows.
\end{proof}

\section{Small genera}
\subsection{Cases $|S|=3,9$}
We prove that if $\cX$ satisfies the hypotheses of Theorem \ref{princ} for $|S|=3$ then (ii) holds.
For this case, our hypothesis (\ref{hyp}) yields $\gg=2$. From (\ref{eq2deuring}), every automorphism of $\aut(\cX)$ of order $3$ has two fixed points on $\cX$. Therefore, (i) of Theorem \ref{princ} cannot occur, and  the assertion follows from Proposition \ref{igusa1}.

{}From now on, $|S|=9$ and  $\cX$ is a curve satisfying the hypotheses of Theorem \ref{princ} but does not have the property given in (i) of Theorem \ref{princ}.
We prove that then (iii) holds. From Propositions  \ref{propap=3} and \ref{propbp=3}, $\cX$ is an ordinary curve of genus $\gg=4$ with  an elementary abelian subgroup $S$ of $\aut(\cX)$ of order $9$.

Let $N$ be the kernel of the permutation representation of $S$ on $\Omega_1\cup \Omega_2$. If $N$ is not trivial then it has order $3$, and the Hurwitz genus formula (\ref{eq1}) and (\ref{eq1bis}) applied to $N$ gives $6=2(\gg-1)\ge 6(\bar{\gg}-1)+24$.
Therefore $S$ acts on $\Omega_1\cup\Omega_2$ faithfully.

By Proposition \ref{prop6febb2013}, $\cX$ is assumed to be a canonical curve embedded in $PG(3,\mathbb{K})$. Then $S$ extends to a subgroup of $PG(3,\mathbb{K})$
which preserves $\cX$ and acts on $\cX$ faithfully.

According to Lemma \ref{flag}, choose the projective coordinate system $(X_0:X_1:X_2:X_3)$ in $PG(3,\mathbb{K})$ in such a way that 
$S$ preserves the canonical flag $$P_0\subset\Pi_1\subset \Pi_2$$
where $P_0=(1:0:0:0)$, $\Pi_1$ is the line through $P_0$ and $P_1=(0:1:0:0)$ while $\Pi_2$ is the plane of equation $X_3=0$. Here $P_0\not\in \cX$, since $S$ fixes no point in $\cX$. Moreover,
$\Pi_2\cap\cX=\Omega_1\cup \Omega_2$. In fact, for any point $R\in \Pi_2\cap \cX$,  Proposition \ref{propap=3} implies that the $S$-orbit of $R$ has size $9$ unless $R\in \Omega_1\cup \Omega_2$. On the other hand $S$ preserves $\Pi_2\cap \cX$, and this implies that the $S$-orbit of $R$ cannot exceed $6$.
\begin{lemma}
\label{lem7feba2013} Both $\Omega_1$ and $\Omega_2$ consist of three collinear points.
\end{lemma}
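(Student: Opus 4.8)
The plan is to exploit the fact that each short orbit has exactly three points, so that $S$ contains an order-$3$ subgroup fixing such an orbit pointwise, and then to invoke the elementary fact already used in the proof of Lemma~\ref{flag}: the fixed locus of a nontrivial $p$-element of $PGL(3,\mathbb{K})$ is a proper projective subspace of $PG(2,\mathbb{K})$, hence a point or a line. Three distinct fixed points lying in such a locus are then forced to be collinear.

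First I would introduce, for $i=1,2$, the kernel $K_i$ of the permutation action of $S$ on $\Omega_i$. Since $|\Omega_i|=3$ and $S$ is transitive on $\Omega_i$, the image of $S$ in $\Sym(\Omega_i)$ is a transitive $3$-group; as $|\Sym(\Omega_i)|=6$ this image has order $3$, so $|K_i|=3$ and $K_i$ fixes each of the three points of $\Omega_i$. Because $S$ acts faithfully on $\Omega_1\cup\Omega_2$, we have $K_1\cap K_2=1$, so both $K_1$ and $K_2$ are nontrivial.

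Next I would fix a generator $g$ of $K_1$ and regard it as an element of the group induced by $S$ on the plane $\Pi_2\cong PG(2,\mathbb{K})$; this is legitimate since $S$ preserves the flag, hence preserves $\Pi_2$ and fixes $P_0\in\Pi_2$. The crucial point is that $g$ does not act trivially on $\Pi_2$: otherwise it would fix $\Pi_2\cap\cX=\Omega_1\cup\Omega_2$ pointwise, contradicting the last assertion of Proposition~\ref{propap=3}, which applies since $|S|=9>3$. Thus $g$ induces a nontrivial $p$-element of $PGL(3,\mathbb{K})$, whose fixed locus in $\Pi_2$ is a proper projective subspace, that is, a point or a line. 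Since this locus contains the three distinct points of $\Omega_1$ (and, incidentally, $P_0$), it cannot be a single point, so it is a line; hence $\Omega_1$ consists of three collinear points. Replacing $K_1$ by $K_2$ and $\Omega_1$ by $\Omega_2$ yields the same conclusion for $\Omega_2$.

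The only delicate step is ruling out that $g$ acts trivially on $\Pi_2$; everything else is a direct application of the structure of fixed loci of unipotent elements in characteristic $p$. This is precisely where the hypothesis $|S|>3$ enters, through the final claim of Proposition~\ref{propap=3}, which guarantees that no nontrivial element of $S$ fixes $\Omega_1\cup\Omega_2$ pointwise.
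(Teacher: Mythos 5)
Your proof is correct and takes essentially the same approach as the paper: both arguments rest on the fact that the fixed locus of a nontrivial order-$3$ projectivity in characteristic $3$ is a single proper linear subspace of $\Pi_2$, combined with the faithfulness of $S$ on $\Omega_1\cup\Omega_2$ (which, in your version, guarantees that $g$ acts nontrivially on $\Pi_2$). The paper merely phrases it contrapositively---if $\Omega_1$ were a triangle, the element of $S$ fixing it pointwise would fix all of $\Pi_2$, hence also $\Omega_2$, pointwise, contradicting faithfulness---whereas you run the same fixed-locus argument directly; the two are logically interchangeable.
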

\begin{proof} Assume on the contrary that $\Omega_1$ is a triangle. Take $g\in S$ such that $g$ fixes each vertex of $\Omega_1.$ Since $g$ is a projectivity of $PG(3,\mathbb{K})$ it fixes $\Pi_2$ pointwise. As $\Pi_2$ also contains  $\Omega_2,$ $g$ must fix $\Omega_2$ pointwise. But this is impossible as $S$ acts on $\cX$ faithfully.
\end{proof}
As a corollary, $I(R,\cX\cap \Pi_2)=1$ for every point $R\in\Omega_1\cup \Omega_2$.  For $i=1,2$, let $r_i$ denote the line containing $\Omega_i$. Their common point is fixed by $S$, and may be chosen for $P_0$. Let $M$ be the subgroup of $S$ which preserves every line through $P_0$. Since $\deg\, \cX=6$, no line meets $\cX$ in more than six distinct points. Therefore, either $|M|=1$ or $|M|=3$. In the latter case, $M$ is an elation group of order $3$ with center $P_0$. If $\Delta$ is its axis then
every point in $\Delta\cap \cX$ is fixed by $M$. Therefore $\Delta$ is not $\Pi_2$ and contains either $r_1$ or $r_2$. Since $S$ is abelian, it  preserves $\Delta$ and hence every plane through $r_1$. But then every $S$-orbit has length at most $3$. A contradiction with Proposition \ref{propap=3}. Hence $M$ is trivial.

Since $P_0\not \in \cX$, the linear system $\Sigma$ of all planes through $P_0$ cuts out on $\cX$ a linear series without fixed point. Therefore this effective linear series has dimension $2$ and degree $6$, and is denoted by $g_2^6$.

It might happen that $g_2^6$ is composed of an involution, and we investigate such a possibility. From \cite[Section 7.4]{hirschfeld-korchmaros-torres2008}, there is a curve ${\cZ}$ whose function field $\mathbb{K}({\cX})$ is an $S$-invariant proper subfield of $\mathbb{K}(\cX)$. Since no non-trivial element in $S$ fixes every line through $P_0$ and hence every plane through $P_0$, $S$ acts on ${\cZ}$ faithfully.  As the genus of $\cZ $ is less than $4$,  applying \eqref{naka16feb2013} to $\cZ$ gives   $\gamma(\cZ)=0$. Therefore, every non-trivial element in $S$ has a unique fixed point $\bar{T}$, see  \cite[Lemma 11.129]{hirschfeld-korchmaros-torres2008}.
{}From this, the support of the divisor of $K(\cX)$ lying over $\bar{T}$ contains the points in $\Omega_1\cup \Omega_2$. Therefore, the line through $P_0$ and a point in $\Omega_1\cup \Omega_2$ must contain all the points in $\Omega_1\cup \Omega_2$. But this would imply that $r_1=r_2$, a contradiction.

Therefore, $g_2^6$ is simple and without fixed point. The projection of $\cX$ from $P_0$ is an irreducible plane curve $\cC$ of degree $6$ and genus $4$ with two triple points $R_1$ and $R_2$ arising from $\Omega_1$ and $\Omega_2$, respectively. Here $\cC$ and $\cX$ are birationally equivalent, and $S$ is a subgroup of $PGL(3,\mathbb{K})$ preserving $\cC$.  For $i=1,2$, a non-trivial projectivity $s_i\in S$ fixing $\Omega_i$ pointwise acts on $\cC$ fixing the point $R_i$.

Choose the projective coordinate system $(X_0:X_1:X_2)$ in $PG(2,\mathbb{K})$ so that $R_1=(0:0:1)$ and $R_2=(0:1:0)$. In affine coordinates $(X,Y)$ with $X=X_1/X_0,\,Y=X_2/X_0$, an equation of $\cC$ is  $f=0$ with an irreducible polynomial $f\in \mathbb{K}[X,Y]$ of degree six. W.l.o.g. the origin $O=(0,0)$ is the common point of two tangents to $\cC$, say $t_1$ at $R_1$ and $t_2$ at $R_2$. Furthermore,  $s_1(O)=(\lambda,0)$, $s_2(O)=(0,\mu)$ with $\lambda,\mu\in \mathbb{K}^*$, and $\lambda=\mu=1$ may be assumed.
Thus $s_1:\,(X,Y)\mapsto (X+1,Y)$ and $s_2:\,(X,Y)\mapsto (X,Y+1)$. Hence
\begin{equation}
\label{eq7feb2013} f(X+1,Y)=f(X,Y),\quad f(X,Y+1)=f(X,Y).
\end{equation}
Since $R_1$ is a triple point of $\cC$, there exist $h_0,h_1,h_2,h_3\in \mathbb{K}[Y]$ such that $$f(X,Y)=h_3X^3+ h_2X^2+h_1X+h_0=0,$$
where $\deg\, h_0\leq 2$ by the particular choice of $t_2$.
{}From this and (\ref{eq7feb2013}), the polynomial
\begin{equation}
\label{eq7febb2013}
f(X+1,Y)-f(X,Y)=h_3-h_2X+h_2+h_1
\end{equation}
 vanishes at every affine point of $\cC$. Since $\cC$ is not rational, this is only possible when (\ref{eq7febb2013}) is the zero polynomial, that is,   $h_2=
h_3+h_1=0.$ Thus $f(X,Y)=h_3(X^3-X)+h_0$. The second mixed partial derivate is $f_{X,Y}=-dh_3/dY$. Similarly, as $R_2$ is a triple point of $\cC$ there exist $k_0,k_3\in \mathbb{K}[X]$ with $\deg\,k_0\leq 2$ such that $f(Y,X)=k_3(Y^3-Y)+k_0$. Since $f_{X,Y}=f_{Y,X}$, this yields $dh_3/dY=dk_3/dX$,  whence $dh_3/dY$ and $dk_3/dX$  both have degree $0$. Thus
$$h_3=c_3Y^3+c_1Y+c_0, \quad   k_3=d_3X^3+d_1X+d_0$$
where $c_0,c_1,c_3,d_0,d_1,d_3\in \mathbb{K}.$ Therefore
$$(c_3Y^3+c_1Y+c_0)(X^3-X)+h_0=(d_3X^3+d_1X+d_0)(Y^3-Y)+k_0.$$
Comparison of the coefficients of $X^3$  shows that $c_3=-c_1,c_0=0.$ Similarly, $d_3=-d_1,d_0=0.$ Thus
$$f(X,Y)=c(X^3-X)(Y^3-Y)+h_0=d(Y^3-Y)(X^3-X)+k_0$$
where $c,d\in \mathbb{K}.$ From this
$(c-d)(X^3-X)(Y^3-Y)=k_0-h_0$ whence $c=d$ and $k_0=h_0=u$ with $u\in \mathbb{K}^*$. Therefore
$$f(X,Y)=(X^3-X)(Y^3-Y)+c=0$$
where $c\in \mathbb{K}^*$. This ends the proof of  Theorem \ref{princ} for $|S|=9$.
\subsection{Case $|S|=27$}
\label{S27}
We exhibit an explicit example.  Let $\cX$ be a non-singular model of the irreducible curve $\cY$ embedded in $PG(3,\mathbb{K})$ defined by the
affine equations
\begin{itemize}
\item[(i)] $(X^3-X)(Y^3-Y)-1=0$;
\item[(ii)] $Z^3-Z+X^3Y-XY^3=0$.
\end{itemize}
A straightforward Magma computation shows that $\gg(\cX)=\gamma(\cX)=10$. Moreover, both maps
$$ g:\,(X,Y,Z)\mapsto (X+1,Y,Z+Y)\quad h:\,(X,Y,Z)\mapsto (X,Y-1,Z+X)$$
are in $\aut(\cX)$. They generate a non-abelian group $S$ of order $27$ and exponent $3$. Therefore $S\cong UT(3,3)$. Actually, $\aut(\cX)$ is larger than $S$ since it also contains $r:\,(X,Y,Z)\mapsto (Y,X,Z)$. This shows that $\cX$ satisfies the hypotheses of Theorem \ref{princ} with $S\cong UT(3,3)$.

\subsection{Case $|S|=81$}
\begin{proposition}
\label{prop20bapr2013}
For $|S|=81$ there are only two possibilities for $S$, namely
\begin{itemize}
\item[{\rm{(a)}}] $S\cong S(81,7)$ where $S(81,7)=C_3 \wr C_3$ is the Sylow $3$-subgroup  of the symmetric group of degree $9$, moreover $M_1\cong C_3\times C_3 \times C_3$, $M_2\cong UT(3,3)$, $M_3\cong M_4\cong C_9 \rtimes C_3$.
\item[{\rm{(b)}}] $S\cong S(81,9)=\langle a,b,c |a^9=b^3=c^3=1,ab=ba,cac^{-1}=ab^{-1},cbc^{-1}=a^3b\rangle$ with exactly $62$ elements of order $3$; moreover $M_1\cong M_2\cong M_3 \cong UT(3,3)$, $M_4\cong C_9\times C_3$.
\end{itemize}
\end{proposition}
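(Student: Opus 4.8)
The plan is to first assemble the constraints already established for $|S|=81$ and then to pin down $S$ by proving it must be a group of maximal class, after which the two possibilities drop out of the classification of such groups together with the ramification data. By Propositions \ref{propbp=3}, \ref{30ag2013} and \ref{d4set2013}, $S$ is a non-abelian, two-generated, non-metacyclic group of order $81$ with $\Phi(S)=S'$ of order $9$; it has exactly four maximal subgroups $M_1,\dots,M_4$, all normal of index $3$, with $M_3,M_4$ semiregular on $\cX$. Since a maximal subgroup isomorphic to $C_{27}$ would display $S$ as metacyclic, contrary to Proposition \ref{d4set2013}, every $M_i$ is a non-cyclic group of order $27$, hence isomorphic to one of $C_3\times C_3\times C_3$, $C_9\times C_3$, $C_9\rtimes C_3$, $UT(3,3)$.

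The decisive step is to show that $S$ has maximal class (nilpotency class $3$). Suppose not; then $S$ has class $2$, so $S'\le Z(S)$. As $S'=\Phi(S)$ has order $9$ and $S$ is non-abelian, $Z(S)$ cannot have order $27$ (that would make $S/Z(S)$ cyclic and $S$ abelian), so $Z(S)=S'=\Phi(S)$ and $S/Z(S)\cong C_3\times C_3$. Each maximal subgroup then satisfies $M_i=\langle x_i\rangle Z(S)$ for any $x_i\in M_i\setminus Z(S)$, and being generated by a single element together with central elements it is abelian. By Proposition \ref{a4set2013} an abelian maximal subgroup forces $S$ to have maximal class, contradicting class $2$. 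Hence $S$ has nilpotency class $3$, so $|S'|=|\Phi(S)|=9$ and $|Z(S)|=3$.

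With $S$ of maximal class, I would split into two cases according to whether some $M_i\cong C_3\times C_3\times C_3$. If so, this elementary abelian maximal subgroup $E$ is normal with $C_S(E)=E$ (since $|Z(S)|=3<|E|$), so $S/E\cong C_3$ acts faithfully on $E\cong\mathbb{F}_3^3$ by a unipotent automorphism of order $3$; maximality of the class forces a single Jordan block (so that $\gamma_2=(g-1)E$ has order $9$), i.e. $E$ is the free $\mathbb{F}_3[C_3]$-module, whence the extension splits and $S\cong C_3\wr C_3=S(81,7)$, giving case (a). If no $M_i$ is elementary abelian, then every maximal subgroup is two-generated, and applying the argument of Proposition \ref{d4set2013} to each $M_i$ yields $S/\Phi(M_i)\cong UT(3,3)$, exhibiting $S$ as a central extension of $UT(3,3)$ by $C_3$; running through the maximal-class, non-metacyclic groups of order $81$ of this form and retaining those compatible with the ramification constraints isolates $S\cong S(81,9)$, giving case (b). In each group the isomorphism types of the four maximal subgroups are intrinsic and are computed directly: $\{C_3^3,\,UT(3,3),\,C_9\rtimes C_3,\,C_9\rtimes C_3\}$ for $S(81,7)$ and $\{UT(3,3),\,UT(3,3),\,UT(3,3),\,C_9\times C_3\}$ for $S(81,9)$.

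It remains to match this intrinsic data with the labelling dictated by the action, i.e. to decide which two maximal subgroups are the semiregular ones $M_3,M_4$. The relevant fact is that the point-stabilizers $S_P$ with $P\in\Omega_1\cup\Omega_2$ are the order-$3$ subgroups with fixed points (Proposition \ref{propap=3}); they lie in $M_1\cup M_2$, meet $\Phi(S)$ trivially, and are avoided by $M_3,M_4$. I would identify the conjugacy classes of these stabilizers inside $S$ and determine which maximal subgroups contain them, thereby reading off that for $S(81,7)$ the two semiregular subgroups are the isomorphic copies of $C_9\rtimes C_3$ (so $M_1\cong C_3^3$, $M_2\cong UT(3,3)$), and that for $S(81,9)$ the abelian subgroup $C_9\times C_3$ is semiregular together with one copy of $UT(3,3)$ (so $M_1\cong M_2\cong M_3\cong UT(3,3)$, $M_4\cong C_9\times C_3$). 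This correspondence between the abstract subgroup lattice and the ramification is the main obstacle, since it cannot be read from the group alone: it must be fixed through the orbit data, via the Deuring--Shafarevich formula applied to each $M_i$ (a semiregular $M_i$ yields $\cX/M_i$ of genus $2$ and $3$-rank $2$, a non-semiregular one yields $3$-rank $0$) and, as in the treatment of the cases $|S|=27$ and $|S|=9$, through explicit models of $\cX$.
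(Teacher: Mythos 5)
Your preliminary reductions are sound, and in two places you genuinely improve on the paper's route. Your derivation of maximal class is cleaner: where the paper invokes the classification of two-generator groups of order $81$ (each having an abelian maximal subgroup) before applying Proposition \ref{a4set2013}, you obtain a contradiction internally from class $2$ (then $Z(S)=S'=\Phi(S)$ of order $9$, every maximal subgroup equals $\langle x\rangle Z(S)$ and is abelian, and Proposition \ref{a4set2013} forces maximal class, contradicting class $2$). Likewise your case (a) is correct and more structural than the paper's: self-centralization of the elementary abelian maximal subgroup $E$, the single Jordan block forced by $|S'|=9$, freeness of $E$ as an $\mathbb{F}_3[C_3]$-module and the resulting splitting identify $S\cong C_3\wr C_3$ without consulting a list. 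Your exclusion of cyclic maximal subgroups via Proposition \ref{d4set2013} is also fine.

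The genuine gap is in case (b). The sentence ``running through the maximal-class, non-metacyclic groups of order $81$ of this form and retaining those compatible with the ramification constraints isolates $S\cong S(81,9)$'' is exactly the content of the paper's proof, and you never carry it out; worse, the one concrete tool you offer for it fails. In a group of maximal class every normal subgroup of order $3$ equals $Z(S)$, so each $\Phi(M_i)$ (of order $3$, since in case (b) every $M_i$ is two-generated) coincides with $Z(S)$, and your observation $S/\Phi(M_i)\cong UT(3,3)$ reduces to $S/Z(S)\cong UT(3,3)$ --- which holds for $S(81,8)$ just as for $S(81,9)$: their presentations differ only in $cac^{-1}=ab$ versus $cac^{-1}=ab^{-1}$, and modulo $Z(S)=\langle a^3\rangle$ both collapse to the Heisenberg group of order $27$. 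So the central-extension criterion cannot eliminate $S(81,8)$, and you verify nothing about $S(81,10)$ either. The missing step, which is the heart of the paper's argument, is a location count for elements of order $3$: by Propositions \ref{propap=3} and \ref{30ag2013}, each of $M_1$, $M_2$ contains an order-$3$ point stabilizer, and such a stabilizer cannot lie in $\Phi(S)$ because $\Phi(S)\subseteq M_3$ with $M_3$ semiregular; since $M_1\cap M_2=\Phi(S)$ and every element outside $\Phi(S)$ lies in exactly one maximal subgroup, $S$ must contain order-$3$ elements outside $\Phi(S)$ in two \emph{distinct} maximal subgroups. This is impossible in $S(81,8)$, whose $26$ elements of order $3$ all lie in its unique maximal subgroup isomorphic to $UT(3,3)$, and in $S(81,10)$, whose $8$ elements of order $3$ all lie in $\Phi(S)$ (here Proposition \ref{b30ago2013} is what the paper cites). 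Without this argument or an equivalent, case (b) is unproven. Your final paragraph on matching $M_1,\dots,M_4$ to the isomorphism types via Deuring--Shafarevich is reasonable and no less detailed than the paper's own handling of that point, so it is not where the problem lies.
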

\begin{proof} There exist exactly seven groups of order $81$ generated by two elements, namely $S(81,i)$ with $i=1,\ldots,7$, and each of them has an abelian normal subgroup of index $3$. By Proposition \ref{a4set2013}, $S$ is of maximal class. There are four pairwise non-isomorphic groups of order $81$ and maximal class, namely (a), (b) and
\begin{itemize}
\item[(c)] $S(81,8)\cong\langle a,b,c |a^9=b^3=c^3=1,ab=ba,cac^{-1}=ab,cbc^{-1}=a^3b\rangle$ with $26$ elements of order $3$;
\item[(d)] $S(81,10)\cong (C_9\rtimes C_3)\rtimes C_3$  with  $8$ elements of order $3$.
 \end{itemize}
One of the four maximal normal subgroups of $S(81,8)$ is isomorphic to $U(3,3)$ and hence it contains all elements of order $3$. On the other hand, (iv) of Proposition \ref{30ag2013} yields that
two of the maximal normal subgroups of $S$, namely $M_1$ and $M_2$, have non-trivial $1$-point stabilizer in $\Omega_1$ and $\Omega_2$, respectively. Hence, both must have an element of order $3$ not contained in $\Phi(S)$. Since $M_1\cap M_2=\Phi(S)$, these elements are not in the same maximal normal subgroup. This contradiction shows that (c) cannot actually occur in our situation. Regarding $S(81,10)$, all elements of order $3$ lie in $\Phi(S)$ as $\Phi(S)$ is an elementary abelian group of order $9$. But this is impossible in our situation since $M_1$ must have an element of order $3$ not in $\Phi(S)$   by Propositions \ref{30ag2013} and \ref{b30ago2013}.
\end{proof}
We exhibit an explicit example. Let $\cX$ be a non-singular model of the irreducible curve $\cY$ embedded in $PG(4,\mathbb{K})$ defined by the
affine equations
\begin{itemize}
\item[(i)] $(X^3-X)(Y^3-Y)-1=0$;
\item[(ii)] $X(U^3-U)-1=0$;
\item[(iii)] $(X+1)(V^3-V)-1=0.$
\end{itemize}
A straightforward Magma computation shows that $\gg(\cX)=\gamma(\cX)=28$.  Furthermore, each of the following four maps
$$
\begin{array}{lll}
&g_1:\,(X,Y,U,V)\mapsto (X,Y+1,U,V), &  g_2:\,(X,Y,U,V)\mapsto (X,Y,U+1,V),\\
&g_3:\,(X,Y,U,V)\mapsto (X,Y,U,V+1), &  g_4:\,(X,Y,U,V)\mapsto(X+1,Y,V,-Y-U-V),
\end{array}
$$
are in $\aut(\cX)$. They generate a non-abelian group $S$ of order $81$ isomorphic to $S(81,7)$.
\subsection{Case $|S|=243$}
\begin{proposition}
\label{b1sep2013} If
 $|S|=243$ and $S$ has a maximal abelian subgroup, then there are only two possibilities for $S$, namely
\begin{itemize}
\item[\rm(i)] $S$ is isomorphic to the unique group $S(243,26)$ of order $243$ with $170$ elements of order $3$, moreover $M_2\cong M_3\cong M_4\cong S(81,9)$, and $M_1\cong C_9\times C_9$,
\item[\rm(ii)] $S$ is isomorphic to the unique group $S(243,28)$ of order $243$ with $116$ elements of order $3$, moreover $M_1\cong M_2\cong S(81,9)$ while $M_3\cong S(81,4)$, and $M_4\cong S(81,10)$.
\end{itemize}
\end{proposition}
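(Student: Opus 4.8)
The plan is to identify $S$ first as a $3$-group of maximal class and then to cut the list of abstract candidates down to two by means of the structural facts already established for the action of $S$ on $\cX$. Since by hypothesis one of the four maximal subgroups of $S$ is abelian, Proposition~\ref{a4set2013} shows that $S$ is of maximal class. As $|S|=3^5$, the lower central series $S=\gamma_1\supset\gamma_2=S'\supset\gamma_3\supset\gamma_4\supset\gamma_5=1$ then has $|S/S'|=9$ and $|\gamma_i/\gamma_{i+1}|=3$ for $i\ge2$, so that $|\Phi(S)|=|S'|=27$ in agreement with Proposition~\ref{30ag2013}; in particular $S$ is two-generated. Using the classification of $p$-groups of maximal class, see \cite{huppertI1967,berkovich2002}, equivalently the small-groups database consulted as elsewhere in the paper, I would list every group of order $243$ that is at once of maximal class and has an abelian maximal subgroup, recording for each candidate the number of its elements of order $3$ and, for each of its four maximal subgroups, the isomorphism type together with how the order-$3$ elements outside $\Phi(S)$ are distributed among them.

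Next I would impose the constraints coming from the curve action. By Proposition~\ref{31ago2013} the two maximal subgroups $M_1,M_2$ carrying the point-stabilizers are non-cyclic, and by the argument in the proof of Proposition~\ref{prop20bapr2013} each of $M_1,M_2$ contains an element of order $3$ lying outside $\Phi(S)$; since $M_1\cap M_2=\Phi(S)$, these two elements lie in two \emph{distinct} maximal subgroups. Hence a candidate can survive only if at least two of its maximal subgroups are non-cyclic and each meets $S\setminus\Phi(S)$ in an element of order $3$. Exactly as $S(81,8)$ and $S(81,10)$ were discarded in Proposition~\ref{prop20bapr2013}, this removes every candidate in which all order-$3$ elements lie in $\Phi(S)$, and every candidate in which the order-$3$ elements outside $\Phi(S)$ are concentrated in a single maximal subgroup. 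Finally, Proposition~\ref{d4set2013} rules out the metacyclic candidates; its proof passes to the quotient $\cX/\Phi(M_3)$ and uses the identification $S/\Phi(M_3)\cong UT(3,3)$ of Proposition~\ref{a31ago2013}. I expect these filters to leave precisely $S(243,26)$ and $S(243,28)$.

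For the two survivors I would then compute the isomorphism types of the four maximal subgroups and confirm the lists in (i) and (ii), checking in each case that exactly two of the four maximal subgroups can play the role of the semiregular subgroups $M_3,M_4$ while the remaining two carry the point-stabilizers, consistently with Proposition~\ref{30ag2013}(iv); note that the abelian maximal subgroup is a point-stabilizer subgroup in case (i) but a semiregular one in case (ii). The main obstacle is the elimination step: for every maximal-class group of order $243$ with an abelian maximal subgroup one must track not merely the total count of order-$3$ elements but their exact distribution across the four maximal subgroups relative to $\Phi(S)$, and then show that the combined requirements — two non-cyclic maximal subgroups each meeting $S\setminus\Phi(S)$ in an element of order $3$, together with non-metacyclicity — single out $S(243,26)$ and $S(243,28)$ and no other group.
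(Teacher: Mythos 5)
Your proposal follows essentially the same route as the paper's proof: use Proposition~\ref{a4set2013} to reduce to the six maximal-class groups of order $243$, then eliminate $S(243,25)$ and $S(243,30)$ by the argument that ruled out case (c) of Proposition~\ref{prop20bapr2013} (a maximal subgroup isomorphic to $S(81,8)$ containing \emph{all} elements of order $3$, incompatible with $M_1$ and $M_2$ each containing an order-$3$ element outside $\Phi(S)$) and eliminate $S(243,27)$, $S(243,29)$ by the argument that ruled out case (d) (all order-$3$ elements lying in $\Phi(S)$) --- exactly the two filters on the distribution of order-$3$ elements relative to $\Phi(S)$ that you identify, with the final subgroup data read off from the classification just as you propose. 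Your extra appeal to Proposition~\ref{d4set2013} (non-metacyclicity) is superfluous --- the paper's two filters already suffice --- but harmless.
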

\begin{proof} There exist exactly six pairwise non-isomorphic groups of order $81$ and maximal class, namely (i), (ii) and $S(243,25)$ with $62$ elements of order $3$; $S(243,27)$ with $8$ elements of order $3$; $S(243,29)$ with $8$ elements of order $3$; $S(243,30)$ with $62$ elements of order $3$.

One of the four maximal normal subgroups of $S(243,28)$ (and of $S(243,30)$) is isomorphic to $S(81,8)$ and hence it contains all elements of order $3$. The argument in the proof of Proposition \ref{prop20bapr2013} ruling out possibility (c) also works in this case. Therefore, neither $S\cong S(243,25)$ nor $S\cong S(243,28)$ is possible. Regarding $S(243,27)$ and $S(243,29)$, we may use the argument from the proof of Proposition \ref{prop20bapr2013} that ruled out possibility (d). Therefore, $S\cong S(243,25)$ and $S\cong S(243,28)$ cannot occur in our situation.
\end{proof}
\section{Infinite Family of Examples} Let $\cC$ be a general curve of genus $2$ as given in Remark \ref{igusa1} with function field $F=\mathbb{K}(\cX)=\mathbb{K}(x,y)$ where
\begin{equation}
\label{eq2dic10} x(y^3-y)-x^2+c=0.
\end{equation}
For a positive integer $N$, let $F_N$ be the largest unramified abelian
extension of $F$ of exponent $N$; that is, $F_N|F$ has the following three properties:
\begin{itemize}
\item[(i)] $F_N|F$ is an unramified Galois extension;
\item[(ii)] $F_N$ is generated by all function fields which are cyclic unramified extensions of $F$ of degree $p^N$,
\item[(iii)] ${\rm{Gal}}(F_N|F)$ is abelian and  $u^{3^N}=1$ for every element $u\in {\rm{Gal}}(F_N|F)$.
\end{itemize}
{}From classical results  due to Schmid and Witt \cite{schmidwitt1938}, $\deg(F_N|F)=3^{2N}$ and ${\rm{Gal}}(F_N|F)$ is the direct product of two copies of the cyclic group of order $3^N$.
Let $\cX$ be the curve such that $F_N=\mathbb{K}(\cX)$. Since $F_N$ is an unramified extension of $F$, from the Deuring-Shafarevic formula (\ref{eq2deuring}) we have that $\gamma(\cX)-1=3^{2N}(2-1)=3^{2N}$. Our aim is to prove that $\aut(\cX)$ contains a $3$-group of order $3^{2N+1}$.

Let $\mathbb{K}(x)$ be rational the subfield of $F$ generated by $x$. Obviously, $\mathbb{K}(x)$ is a subfield of $F_N$ and we are going to consider the Galois closure $M$ of $F_N|\mathbb{K}(x)$. Let  $M=\mathbb{K}(\cY)$ where $\cY$ is an algebraic curve defined over $\mathbb{K}$. Take any $\mu\in {\rm{Gal}}(M|\mathbb{K}(x))$. Then $\mu$ is a $\mathbb{K}$-automorphism of $\cY$ fixing $x$. Let $v=\mu(y)$. Since $\mu(x(y^3-y)-x^2+c)=x(v^3-v)-x^2+c$, from (\ref{eq2dic10})
$$x(v^3-v)-x^2+c=0.$$  This together with (\ref{eq2dic10}) yield that either $v=y$ or $v=y\pm 1$. In both cases $v\in F$. Therefore, ${\rm{Gal}}(M|\mathbb{K}(x))$ viewed as a subgroup $S$ of $\aut(\cY)$ preserves
$F$. From the definition of $F_N$, this implies that $S$ also preserves $F_N$, that is $S$ is a subgroup of $\aut(\cX)$. Since $F_N\subseteq M$, we have that
$$[F_N:\mathbb{K}(x)]\leq [M:\mathbb{K}(x)]=|S|.$$ Furthermore, $$[F_N:\mathbb{K}(x)]=[F_N:F][F:\mathbb{K}(x)]=3^{2N}3=3^{2N+1}.$$
On the other hand $|S|>3^{2N+1}$ is impossible by the Nakajima's bound.

As a corollary, $\aut(\cX)$ has a subgroup $S$ so that the Hasse-Witt invariant $\gamma(\cX)$ is equal to $|S|/3$.

Our construction together with Proposition \ref{pro12dic} provides a curve of type (ii) in Theorem \ref{princ}, for every power of $3$.

In particular, if $N=1$ then the resulting
curve is isomorphic to that given in Subsection \ref{pro12dic}. For $N=2$, we have $S\cong
S(243,26)$ and case (i) in Proposition \ref{b1sep2013} occurs;
%
%
furthermore, $S/Z(S)$ is isomorphic to $S(81,9)$, and the quotient curve $\cX/Z(S)$
provides an example for type (b) in Proposition \ref{prop20bapr2013}.

\vspace{0,5cm}\noindent {\em Authors' addresses}:

\vspace{0.2 cm} \noindent Massimo GIULIETTI \\
Dipartimento di Matematica e Informatica
\\ Universit\`a degli Studi di Perugia \\ Via Vanvitelli, 1 \\
06123 Perugia
(Italy).\\
 E--mail: {\tt giuliet@dipmat.unipg.it}

\vspace{0.2cm}\noindent G\'abor KORCHM\'AROS\\ Dipartimento di
Matematica\\ Universit\`a della Basilicata\\ Contrada Macchia
Romana\\ 85100 Potenza (Italy).\\E--mail: {\tt
gabor.korchmaros@unibas.it }

    \end{document}